\documentclass[10pt, reqno]{amsart}
\usepackage{mathrsfs}
\usepackage{epsfig}
\usepackage[dvipsnames]{xcolor}
\usepackage{graphicx}
\usepackage{amsmath}
\usepackage{amsfonts}
\usepackage{amssymb}
\usepackage{bbm}
\usepackage{amsthm}
\usepackage{amscd}
\usepackage{verbatim}
\usepackage{tikz}
\usepackage[sc]{mathpazo}
\usepackage[T1]{fontenc} 
\usepackage[autostyle]{csquotes}
\usepackage{hyperref}
\usepackage{ytableau}
\usetikzlibrary{arrows}

\tikzset{
vertex/.style={circle,draw,minimum size=1.5em},
edge/.style={->,> = latex'}
}

\usepackage{mathtools}

\mathtoolsset{showonlyrefs=true}

\theoremstyle{plain}
\newtheorem{theorem}{Theorem}[section]
\newtheorem{proposition}[theorem]{Proposition}

\newtheorem{corollary}[theorem]{Corollary}

\theoremstyle{definition}

\newtheorem{remark}[theorem]{Remark}
\newtheorem{example}[theorem]{Example}

\numberwithin{equation}{section}

\DeclareMathAlphabet{\mathpzc}{OT1}{pzc}{m}{it}

\definecolor{MyBlue1}{RGB}{229, 119, 167}
\definecolor{MyBlue2}{RGB}{61, 101, 165}
\definecolor{MyBlue3}{RGB}{124, 161, 204}

%\definecolor{MyBlue1}{RGB}{255, 0, 127}
%\definecolor{MyBlue2}{RGB}{255, 0, 255}
%\definecolor{MyBlue3}{RGB}{127, 0, 255}

\newcommand{\tr}{\operatorname{tr}}

\newcommand{\norm}[1]{\| #1 \|}

%\DeclareMathAlphabet{\mathpzc}{OT1}{pzc}{m}{it}

\let\oldenumerate=\enumerate
\def\enumerate{
\oldenumerate
\setlength{\itemsep}{5pt}
}
\let\olditemize=\itemize
\def\itemize{
\olditemize
\setlength{\itemsep}{5pt}
}

\usepackage{hyperref}

\begin{document}

\title[Extremal polynomial norms of graphs]{Extremal polynomial norms of graphs}
\author[\'A. Ch\'avez]{\'Angel Ch\'avez}
\author[S. Fullerton]{Sarah Fullerton}
\author[S. LaFortune]{Sam LaFortune}
\author[K. Linarez]{Keyron Linarez}
\author[N. Liyanage]{Nethmin Liyanage}
\author[J. Son]{Justin Son}
\author[T. Ting]{Tyler Ting}

%\address{Department of Mathematics and Computer Science, Davidson College, 209 Ridge Rd, Box 5000, Davidson, NC 28035} 
%\email{anchavez@davidson.edu}

\begin{abstract}
Recent work shows that a new family of norms on Hermitian matrices arise by evaluating the even degree complete homogeneous symmetric (CHS) polynomials on the eigenvalues of a Hermitian matrix. The \emph{CHS norm} of a graph is then defined by evaluating the even degree CHS polynomials on the eigenvalues of the adjacency matrix of a graph. The fact that these norms are defined in terms of eigenvalues (as opposed to singular values) ensures they can distinguish between graphs that other norms cannot. In addition, we prove that the CHS norms are minimized over all connected graphs by the path and maximized over all connected graphs by the complete graph. Finally, we prove that the CHS norms are minimized over all trees by the path and maximized over all trees by the star. Our paper is intended for a wide mathematical audience and we assume no prior knowledge about graphs or symmetric polynomials.
\end{abstract}

\maketitle

\section{Introduction}

%Suppose $V$ is a vector space over a field $\mathbb{K}$ (either $\mathbb{R}$ or $\mathbb{C}$). A norm on $V$ is a function on $V$ that generalizes our intuitive notion of length. More formally, a \emph{norm} on $V$ is a function $\norm{\cdot}:V\to \mathbb{R}$ that satisfies the following properties:\\

%\textbf{i.} ({\sc Positive Definiteness}) $\norm{v}=0$ if and only if $v=0$.\\

%\textbf{ii.} ({\sc Homogeneity}) $\norm{\alpha v}=|\alpha|\norm{v}$ for all $v\in V$ and $\alpha\in \mathbb{K}$.\\

%\textbf{iii.} ({\sc Triangle Inequality}) $\norm{u+v}\leq \norm{u}+\norm{v}$ for all $u,v\in V$.\\

%\begin{example} Consider the real vector space $V=\mathbb{R}^2$. A classic example of a norm on $V$ is given by the \emph{euclidean} norm, which is defined by 
%\begin{align*}
%\norm{(x,y)}_2=\sqrt{x^2+y^2}.
%\end{align*} Another classic norm on $V$ is given by the \emph{infinity} norm, which is defined by 
%\begin{align*}
%\norm{(x,y)}_{\infty}=\max\{|x|, |y|\}.
%\end{align*}
%\end{example}

Suppose $G$ is a graph of order $n$. The \emph{singular values} of $G$ are the singular values of the adjacency matrix $A(G)$ and we write them in nonincreasing order $\sigma_1\geq \sigma_2\geq \cdots \geq \sigma_n$. The adjacency matrix $A(G)$ is normal since it is real and symmetric. Therefore, the singular values of $G$ correspond to the absolute values of the eigenvalues of $A(G)$ \cite[Theorem 15.3.4]{GarciaHorn}.

\begin{example}\label{ex:IntroComplete}
The adjacency matrix of the complete graph $G$ of order $3$ is 
\begin{align*}
A(G)=\begin{bmatrix}
0 & 1 & 1 \\
1 & 0 & 1 \\
1 & 1 & 0 
\end{bmatrix}.
\end{align*} The eigenvalues of $A(G)$ are $2, -1, -1$. Consequently, the singular values of $G$ are given by $\sigma_1=2$ and $\sigma_2=\sigma_3=1$.
\end{example}

The adjacency matrix of a graph $G$ is real and symmetric. Therefore, a norm on the space of real symmetric matrices gives us a natural way to measure the ``size'' of $G$. There are several examples of such norms:\\

\noindent\underline{\sc Graph energy}. The \emph{energy} of $G$ is defined as the sum $\norm{G}_{\ast}=\sigma_1+\sigma_2+\cdots +\sigma_n$
of the singular values of $G$ and was introduced by Gutman in 1978 \cite{Gutman1}.\\

\noindent\underline{\sc Spectral norm}. The \emph{spectral norm} of $G$ is defined as $\norm{G}=\sigma_1.$ The Perron-Frobenius theorem ensures that $\norm{G}$ equals the largest eigenvalue of $A(G)$.\\

\noindent\underline{\sc Ky Fan norms}. The \emph{Ky Fan $k$-norm} of $G$ is the sum $\norm{G}_{\tiny\mbox{KF}(k)}=\sigma_1+\sigma_2+\cdots +\sigma_k$ of the $k$ largest singular values of $G$. We remark $\norm{G}_{\tiny\mbox{KF}(n)}=\norm{G}_{\ast}$. \\

\noindent\underline{\sc Schatten norms}. The \emph{Schatten $p$-norm} of $G$ is $\norm{G}_{S_p}=\big( \sigma_1^p+\sigma_2^p+\cdots +\sigma_n^p \big)^{\frac{1}{p}}$ for all real numbers $p\geq 1$. We remark $\norm{G}_{S_1}=\norm{G}_{\ast}$.\\

\begin{example}
Consider the graph $G$ of Example \ref{ex:IntroComplete}. Various norms of $G$ are
\begin{center}
$\norm {G}_{\ast} = 4,\quad \norm {G} = 2,\quad \norm {G}_{S_2} = \sqrt{6}$
\end{center} 
\end{example}

There are hundreds of research articles devoted to the study of $\norm{G}_{\ast}$ and $\norm{G}$. And three decades of progress on the energy of graphs, for instance, is summarized in \cite{GutmanLiShi}. The lasting interest in $\norm{G}_{\ast}$ and $\norm{G}$ has more recently expanded to include the study of more obscure norms of graphs. Nikiforov establishes several key results for the Ky Fan and Schatten norms of graphs, for instance, in \cite{Nikiforov1, Nikiforov2}.

\subsection{The CHS norms of graphs}
A new family of norms on the space of real symmetric matrices was introduced in \cite{Aguilar}. This family of norms are defined by evaluating the complete homogeneous symmetric polynomials of even degree on the eigenvalues of a real symmetric matrix. The \emph{complete homogeneous symmetric} (CHS) polynomial of degree $d$ in the variables $x_1, x_2, \ldots, x_n$ is defined as the sum
\begin{align*}
h_d(x_1, x_2, \ldots, x_n)=\sum_{1\leq i_1\leq i_2\leq\cdots \leq i_d\leq n}x_{i_1}x_{i_1}\cdots x_{i_d}
\end{align*}of all degree-$d$ monomials in $x_1, x_2, \ldots, x_n$ \cite[Section 7.5]{Stanley2}. A useful description of the CHS polynomials is given by their generating function. In particular, the CHS polynomials satisfy the following relation \cite[Equation 7.11]{Stanley2}:

\begin{align}
\sum_{d=0}^{\infty}h_d(x_1, x_2, \ldots, x_n)t^d=\prod_{i=1}^n \frac{1}{1-x_it}.\label{eq:MGF}
\end{align}

\begin{example}\label{ex:CHS}
A few examples of CHS polynomials in the variables $x,y,z$ are 
\begin{align*}
h_1(x,y,z)&= x + y + z, \\
h_2(x,y,z)&= x^2 + y^2 +z^2+xy + yz + zx, \\
h_3(x,y,z)&= x^3 + y^3+z^3+x^2y + x^2z + y^2x + y^2z + z^2x + z^2y + xyz.
\end{align*} 
\end{example}

Let $\mathrm{H}_n$ denote the real space of $n\times n$ Hermitian matrices. The eigenvalues of each $A\in \mathrm{H}_n$ are real \cite[Theorem 12.6.1]{GarciaHorn} and we write them in nonincreasing order $\lambda_1\geq \lambda_2\geq \cdots \geq \lambda_n$. The following is \cite[Theorem 1]{Aguilar}.

\begin{theorem}\label{thm:Hermitian}
Suppose $d\geq 2$ is even. The following function defines a norm on $\mathrm{H}_n$:
\begin{align*}
\norm{A}_d=\Big( h_d(\lambda_1, \lambda_2, \ldots, \lambda_n)\Big)^{\frac{1}{d}}.
\end{align*} 
\end{theorem}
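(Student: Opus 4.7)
The plan is to verify the three norm axioms---absolute homogeneity, positive definiteness, and the triangle inequality---for $\|\cdot\|_d$ on $\mathrm{H}_n$.

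\emph{Homogeneity} is immediate: for $c \in \mathbb{R}$, the eigenvalues of $cA$ are $c\lambda_1,\ldots,c\lambda_n$, and since $h_d$ is homogeneous of degree $d$, we have $h_d(c\lambda) = c^d h_d(\lambda)$. Because $d$ is even, $c^d = |c|^d$, so $\|cA\|_d = |c|\|A\|_d$.

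For the remaining two axioms I would proceed in two stages: first, establish that the ``vector CHS functional'' $\mu \mapsto h_d(\mu)^{1/d}$ is a permutation-symmetric norm on $\mathbb{R}^n$; second, transfer this to Hermitian matrices via eigenvalue majorization. The central tool for stage one is the Dirichlet-type integral representation
\begin{align*}
h_d(\mu_1, \ldots, \mu_n) = \binom{d+n-1}{n-1} \int_{\Delta_{n-1}} (t_1 \mu_1 + \cdots + t_n \mu_n)^d \, d\sigma(t),
\end{align*}
where $\sigma$ is the uniform probability measure on the standard $(n-1)$-simplex. This identity follows from multinomial expansion of $(t\cdot\mu)^d$ combined with the Dirichlet moment formula $\int_{\Delta_{n-1}} t_1^{\alpha_1}\cdots t_n^{\alpha_n}\,d\sigma(t) = (n-1)!\,\alpha_1!\cdots\alpha_n!/(|\alpha|+n-1)!$. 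Since $d$ is even, the integrand is continuous and nonnegative on the simplex, and strictly positive off the $\sigma$-null set $\{t \in \Delta_{n-1} : t \cdot \mu = 0\}$, giving $h_d(\mu) > 0$ whenever $\mu \ne 0$. Minkowski's inequality in $L^d(\Delta_{n-1},\sigma)$ then yields $h_d(\mu+\nu)^{1/d} \leq h_d(\mu)^{1/d} + h_d(\nu)^{1/d}$, establishing that $h_d^{1/d}$ is a symmetric norm on $\mathbb{R}^n$.

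For stage two, I would invoke the Lidskii--Wielandt (equivalently, Ky Fan) theorem: the decreasingly-ordered eigenvalue vector satisfies $\lambda(A+B)^\downarrow \prec \lambda(A)^\downarrow + \lambda(B)^\downarrow$, with the trace identity $\mathrm{tr}(A+B)=\mathrm{tr}(A)+\mathrm{tr}(B)$ promoting the weak majorization to full majorization. Since $h_d^{1/d}$ is symmetric and convex (being a norm), Hardy--Littlewood--P\'olya renders it Schur-convex, and thus
\begin{align*}
\|A+B\|_d &= h_d(\lambda(A+B))^{1/d} \leq h_d\bigl(\lambda(A)^\downarrow + \lambda(B)^\downarrow\bigr)^{1/d} \\
&\leq h_d(\lambda(A))^{1/d} + h_d(\lambda(B))^{1/d} = \|A\|_d + \|B\|_d,
\end{align*}
the final inequality being the vector triangle inequality from stage one. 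Positive definiteness on $\mathrm{H}_n$ then follows since $\|A\|_d = 0$ forces $\lambda(A)=0$, and a Hermitian matrix with zero spectrum is the zero matrix.

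The heart of the argument is the integral representation in stage one: once it is in hand, positivity of $h_{2k}$ on $\mathbb{R}^n$ (a theorem of Hunter) drops out essentially for free, and the matrix-level triangle inequality reduces to standard invocations of Minkowski, Lidskii--Wielandt, and Schur-convexity. The main obstacle I anticipate is verifying the Schur-convexity/majorization step cleanly---in particular, confirming that the trace identity upgrades Lidskii's partial-sum inequalities to full majorization, so that the symmetric convex functional $h_d^{1/d}$ can be applied monotonically.
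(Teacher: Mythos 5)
Your argument is correct and complete. Note that the paper does not prove this theorem itself --- it only cites it as Theorem~1 of Aguilar--Ch\'avez--Garcia--Vol\v{c}i\v{c} --- so there is no in-paper proof to compare against; but your route is essentially the standard one from that reference and the surrounding literature. Your Dirichlet-simplex integral representation is the deterministic form of the exponential-random-variable moment formula the paper alludes to in its remarks (it yields Hunter's positivity and, via Minkowski in $L^d$, the vector-level triangle inequality), and your stage two is exactly the Schur-convexity-plus-majorization mechanism that the paper itself records in \eqref{eq - majorization} and \eqref{eq - Schur-convex} and attributes to Tao and to Ch\'avez--Garcia--Hurley. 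The one step worth stating explicitly, which you correctly flag as the main thing to check, is that Ky Fan's eigenvalue inequalities give $\sum_{j\leq k}\lambda_j^{\downarrow}(A+B)\leq\sum_{j\leq k}\bigl(\lambda_j^{\downarrow}(A)+\lambda_j^{\downarrow}(B)\bigr)$ for all $k$, and additivity of the trace turns the case $k=n$ into an equality, so the majorization $\lambda(A+B)\prec\lambda(A)^{\downarrow}+\lambda(B)^{\downarrow}$ is indeed full rather than weak; combined with the fact that a permutation-symmetric convex function is Schur-convex, your chain of inequalities goes through.
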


\begin{remark}
Hunter established positive definiteness of the even degree CHS polynomials on $\mathbb{R}^n$ \cite{Hunter}. Tao further proved that the even degree CHS polynomials are Schur convex (respect the majorization partial order) on $\mathbb{R}^n$ \cite{Tao}. An alternate proof for Schur convexity appears in the proof of \cite[Theorem 1.1 (d)]{Chavez1}. Various proofs of positive definiteness appear in the literature \cite{Barvinok, Baston, Bottcher, GarciaOmar, Roventa, Tao}.
\end{remark}

\begin{remark}
The norms $\norm{\cdot}_d$ of Theorem \ref{thm:Hermitian} extend naturally to norms on the space $\mathrm{M}_n$ of $n\times n$ complex matrices \cite[Theorem 3]{Aguilar}. However, we do not concern ourselves with these more general norms because our graphs are not directed and therefore have Hermitian adjacency matrices.
\end{remark}

\begin{remark}
Theorem \ref{thm:Hermitian} can be interpreted in terms of moments of weighted sums of independent standard exponential random variables \cite[Subsection 2.2]{Aguilar}. A general probabilistic version of Theorem \ref{thm:Hermitian} appears in \cite[Theorem 1.1]{Chavez1}. The proof of \cite[Theorem 1.1]{Chavez1} relies on Lewis' framework for convex matrix analysis \cite{Lewis}. A completely new and much simpler proof of \cite[Theorem 1.1]{Chavez1} appears in the proof of \cite[Theorem 2]{Chavez2}. 
\end{remark}

\noindent\underline{\sc CHS norms}. The space of real symmetric matrices is a subspace of $\mathrm{H}_n$. Consequently, we can use the norms of Theorem \ref{thm:Hermitian} to measure the size of a graph. In particular, suppose $G$ is a graph with adjacency matrix $A(G)$. The \emph{CHS $d$-norm} of $G$ is denoted $\norm{G}_d$ and is defined by setting $\norm{G}_d=\norm{A(G)}_d.$\\

\begin{example}
Consider the graph $G$ of Example \ref{ex:IntroComplete} and the polynomials of Example \ref{ex:CHS}. The eigenvalues of $A(G)$ are $\lambda_1=2$ and $\lambda_2=\lambda_3=-1$. Therefore, 
\begin{align*}
\norm{G}_4=\Big( h_4(\lambda_1, \lambda_2, \lambda_3)\Big)^{\frac{1}{4}}= 9^{\frac{1}{4}} \approx 1.73,\\
\norm{G}_6=\Big( h_6(\lambda_1, \lambda_2, \lambda_3)\Big)^{\frac{1}{6}}= 31^{\frac{1}{6}}\approx 1.77.\\
\end{align*}
\end{example}

An important observation is in order. The energy and spectral, Ky Fan and Schatten norms are defined in terms of singular values. The CHS norms are defined in terms of eigenvalues. The CHS norms can therefore distinguish between graphs that the other norms cannot. The following appears in \cite[Example 2]{Aguilar}.

\begin{example}\label{ex:SingularlyCospectral}
Let $K$ denote the matrix of Example \ref{ex:IntroComplete}. Define $G$ and $H$ to be the graphs whose adjacency matrices are given by
\begin{align*}
A(G)=\begin{bmatrix}
K & 0\\
0 &K
\end{bmatrix},\quad A(H)=\begin{bmatrix}
0 & K\\
K & 0
\end{bmatrix}.
\end{align*}The graph $G$ has eigenvalues $2, 2, -1, -1,-1,-1$ and the graph $H$ has eigenvalues $2, 1, 1, -1, -1, -2$. Consequently, the graphs $G$ and $H$ have the same singular values but distinct eigenvalues. The graph energies, together with the spectral, Ky Fan and Schatten norms of $G$ and $H$ are equal. However, the CHS 6-norms of $G$ and $H$ are distinct. In particular, $\norm{G}_6^6=120$ and $\norm{H}_6^6=112$.
\end{example}

Graphs with the same singular values but different eigenvalues are called \emph{noncospectral singularly cospectral}. The graphs in Example \ref{ex:SingularlyCospectral} are examples of such graphs. Infinitely many pairs of noncospectral singularly cospectral graphs are given in \cite[Theorem 4.1]{Conde}: $G=F\times K_2$ and $H=F\sqcup F$ are noncospectral singularly cospectral whenever $F$ is a nonbipartite graph of order $n\geq 3$. The graphs of Example \ref{ex:SingularlyCospectral} are exactly of this type. In particular, let $F=K_3$.

%%%%%%%%%%%%%%%%%%%%%%%%%%%%%%%%%%%%%%%%%%%%%%%%%%%%%%%%%%%%%%%%%%%%%%%%%%%
%%%%%%%%%%%%%%%%%%%%%%%%%%%%%%%%%%%%%%%%%%%%%%%%%%%%%%%%%%%%%%%%%%%%%%%%%%%
%%%%%%%%%%%%%%%%%%%%%%%%%%%%%%%%%%%%%%%%%%%%%%%%%%%%%%%%%%%%%%%%%%%%%%%%%%%

\subsection{Statement of results} The following theorem ensures that the CHS norms can always distinguish between noncospectral singularly cospectral graphs.

\begin{theorem}\label{thm:MainTheorem1}
If $G$ and $H$ are noncospectral singularly cospectral, then $\norm{G}_d\neq \norm{H}_d$ for some even integer $d\geq 6$.
\end{theorem}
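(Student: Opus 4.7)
The plan is to argue by contradiction: assume that $\norm{G}_d = \norm{H}_d$, equivalently $h_d(\lambda(G)) = h_d(\lambda(H))$, for every even integer $d \geq 6$, and deduce that $G$ and $H$ must in fact be cospectral. The first observation is that the same identity holds automatically for $d = 0, 2, 4$: since $A(G)$ has zero diagonal, $p_1 := \tr A(G) = 0$, so Newton's identities collapse to $h_0 = 1$, $h_2 = \tfrac{1}{2} p_2$, and $h_4 = \tfrac{1}{8} p_2^2 + \tfrac{1}{4} p_4$, which depend only on the even power sums $p_2 = \sum_i \lambda_i^2$ and $p_4 = \sum_i \lambda_i^4$. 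Singular cospectrality of $G$ and $H$ gives $p_{2k}(G) = p_{2k}(H)$ for every $k \geq 1$, so these quantities agree; hence $h_d(\lambda(G)) = h_d(\lambda(H))$ for \emph{every} even $d \geq 0$.

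To exploit this, I would use the generating function \eqref{eq:MGF}. Set
\[
F(t) \;=\; \sum_{d \geq 0} h_d(\lambda(G))\, t^d \;=\; \prod_i \frac{1}{1 - \lambda_i t}, \qquad \widetilde F(t) \;=\; \prod_i \frac{1}{1 - \mu_i t},
\]
and decompose each into its parts even and odd in $t$: $F = F_e + F_o$ and $\widetilde F = \widetilde F_e + \widetilde F_o$. The previous paragraph gives $F_e = \widetilde F_e$. Moreover, singular cospectrality yields
\[
F(t)\,F(-t) \;=\; \prod_i \frac{1}{1 - \lambda_i^2 t^2} \;=\; \prod_i \frac{1}{1 - \sigma_i^2 t^2} \;=\; \widetilde F(t)\,\widetilde F(-t).
\]
Expanding $F(t)F(-t) = F_e^2 - F_o^2$ (and similarly for $\widetilde F$) and canceling the common $F_e^2 = \widetilde F_e^2$ gives $F_o^2 = \widetilde F_o^2$, hence $F_o = \pm \widetilde F_o$.

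The case $F_o = \widetilde F_o$ forces $F = \widetilde F$, so $\{\lambda_i\} = \{\mu_i\}$ as multisets, contradicting noncospectrality. The delicate case --- and the main obstacle --- is $F_o = -\widetilde F_o$, which gives $F(t) = \widetilde F(-t)$ and hence $\{\lambda_i\} = \{-\mu_i\}$ as multisets. To rule this out I would invoke two standard facts from spectral graph theory. First, for every $k \geq 0$ the power sum $p_{2k+1}(G) = \tr A(G)^{2k+1}$ counts closed walks of length $2k+1$ in $G$ and is therefore nonnegative (and similarly for $H$), whereas the identity $\{\lambda_i\} = \{-\mu_i\}$ forces $p_{2k+1}(G) = -p_{2k+1}(H)$; both sides must therefore vanish. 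Second, a graph whose odd closed-walk counts all vanish has no odd cycle and is bipartite, so $G$ and $H$ are both bipartite. But bipartite graphs have spectrum symmetric about $0$, giving $\{\mu_i\} = \{-\lambda_i\} = \{\lambda_i\}$, a final contradiction with noncospectrality.
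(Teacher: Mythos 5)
Your proof is correct, and it takes a genuinely different route from the paper's. The paper argues directly through the power-sum expansion of Proposition \ref{prop:PowerSum}: since all even power sums agree and $p_1=0$, noncospectrality yields a smallest odd $j>1$ with $p_j(\boldsymbol{\lambda})\neq p_j(\boldsymbol{\mu})$; taking $d=j+3$, the only partition of $d$ whose power-sum product can differ is $(j,3)$, so $\norm{G}_d^d-\norm{H}_d^d=z_{(j,3)}^{-1}\big(p_j(\boldsymbol{\lambda})p_3(\boldsymbol{\lambda})-p_j(\boldsymbol{\mu})p_3(\boldsymbol{\mu})\big)$. You instead package all degrees at once via the generating function \eqref{eq:MGF}, split $F$ and $\widetilde F$ into even and odd parts, and use $F(t)F(-t)=F_e^2-F_o^2$ together with the fact that $\mathbb{R}[[t]]$ is an integral domain to reduce to the dichotomy $F_o=\pm\widetilde F_o$; the minus sign is then excluded by nonnegativity of odd closed-walk counts and the symmetric spectrum of bipartite graphs. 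What your route buys: it is uniform in $d$ (it shows that agreement of $\norm{\cdot}_d$ for \emph{all} even $d$ forces cospectrality), and it transparently handles the triangle-free case $p_3(\boldsymbol{\lambda})=p_3(\boldsymbol{\mu})=0$, where the paper's distinguished partition $(j,3)$ contributes $p_jp_3=0$ to both sides whenever $j>3$ and so does not by itself certify a difference; your bipartiteness step is precisely what closes that corner. What it costs: it is nonconstructive, exhibiting no explicit degree at which the norms differ, whereas the paper's argument (when $p_3\neq 0$ or $j=3$) pinpoints $d=j+3$. Both proofs ultimately rest on the same two inputs, namely $p_1=0$ and the agreement of all even power sums.
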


Our main focus is on finding extremal graphs for CHS norms. To begin, suppose $G$ is a simple connected graph of order $n$. What are the smallest and largest values that a given norm of $G$ can take? For instance, the spectral norm of $G$ satisfies the bounds
\begin{align}
2\cos\Big( \frac{\pi}{n+1}\Big)=\norm{P_n}\leq\norm{G}\leq \norm{K_n}=n-1,\label{eq:Graph}
\end{align}in which $P_n$ denotes the path of order $n$ and $K_n$ denotes the complete graph of order $n$. Moreover, the spectral norm of a tree $T$ of order $n$ satisfies the bounds 
\begin{align}
2\cos\Big( \frac{\pi}{n+1}\Big)=\norm{P_n}\leq\norm{T}\leq \norm{S_n}=\sqrt{n-1},\label{eq:Tree}
\end{align} in which $S_n$ denotes the star with $n-1$ leaves. Bounds \eqref{eq:Graph} and \eqref{eq:Tree} were established in 1957 by Collatz and Sinogowitz \cite{Collatz}. The CHS norms satisfy the same bounds.

\begin{theorem}\label{thm:MainTheorem2}
Suppose $d\geq 2$ is even. The CHS norms satisfy the following bounds: \\

\noindent (a) If $G$ is a simple connected graph of order $n$, then $\norm{P_n}_d\leq \norm{G}_d\leq \norm{K_n}_d.$\\

\noindent (b) If $T$ is a tree of order $n$, then $\norm{P_n}_d\leq \norm{T}_d\leq \norm{S_n}_d.$
\end{theorem}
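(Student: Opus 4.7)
The plan is to translate each CHS-norm inequality into a family of inequalities for traces of powers of adjacency matrices, which count closed walks. The bridge is the classical expansion
\begin{equation*}
h_d(x_1,\dots,x_n) = \sum_{\pi \vdash d} \frac{1}{z_\pi}\, p_\pi(x_1,\dots,x_n), \qquad z_\pi = \prod_i i^{m_i(\pi)}\, m_i(\pi)!,
\end{equation*}
obtained by expanding the exponential form of the generating function \eqref{eq:MGF}; here $p_k(x_1,\dots,x_n)=\sum_i x_i^k$, $p_\pi = p_{\pi_1} p_{\pi_2}\cdots$, and every coefficient $1/z_\pi$ is strictly positive. Substituting $x_i=\lambda_i$ for the eigenvalues of $A(G)$ gives $p_k(\lambda(G))=\tr(A(G)^k)=W_k(G)$, the number of closed walks of length $k$ in $G$, and each $W_k(G)\geq 0$. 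Consequently, whenever $W_k(G)\leq W_k(H)$ holds for every $k\geq 1$, one immediately obtains $\norm{G}_d\leq \norm{H}_d$, so the theorem reduces to four walk-count inequalities.

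I would handle the easy bounds first. Since any simple graph $G$ of order $n$ is a subgraph of $K_n$, every closed walk in $G$ is a closed walk in $K_n$ and $W_k(G)\leq W_k(K_n)$ for all $k$; this gives the upper bound in (a). For the lower bound in (a), I would use that deleting an edge only weakly decreases each $W_k$, since walks in $G-e$ form a subset of those in $G$: starting from any connected $G$ with more than $n-1$ edges, iteratively remove non-bridges (which exist as long as there is a cycle) to reach a spanning tree $T$, yielding $W_k(T)\leq W_k(G)$; combined with the tree bound $W_k(P_n)\leq W_k(T)$ this finishes the inequality. Assertion (b) lower is exactly that tree bound, and (b) upper asks for $W_k(T)\leq W_k(S_n)$. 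In short, the whole theorem rests on the two extremal tree inequalities
\begin{equation*}
W_k(P_n) \leq W_k(T) \leq W_k(S_n) \quad\text{for every tree }T\text{ of order }n\text{ and every }k\geq 1.
\end{equation*}

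The hard part will be establishing these tree inequalities. For odd $k$ they are trivial, because trees are bipartite and $W_k=0$ throughout. For even $k$, the plan is to use a local grafting move on trees: given $T\neq S_n$ (respectively $T\neq P_n$), I would exhibit a one-step transformation $T\rightsquigarrow T'$ moving $T$ closer to $S_n$ (resp.\ $P_n$) in a suitable poset on trees and satisfying $W_k(T)\leq W_k(T')$ (resp.\ $W_k(T)\geq W_k(T')$) for every $k$. Each such comparison reduces to constructing an explicit injection between sets of closed walks in $T$ and $T'$, tracking how walks use the edge being moved; iterating the transformation drives any tree to $S_n$ or $P_n$ and yields the inequalities. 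This is essentially the poset-of-trees strategy of Csikv\'ari, and it is where the combinatorial work is concentrated; once those two tree inequalities are in hand, the reductions in the previous paragraph assemble the four bounds of Theorem \ref{thm:MainTheorem2} uniformly.
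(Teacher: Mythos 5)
Your proposal follows essentially the same route as the paper: both reduce $\norm{\cdot}_d$ to closed-walk counts via the positive-coefficient power-sum expansion of $h_d$ (Proposition \ref{prop:NormClosed}), dispose of the $K_n$ upper bound by subgraph embedding, and rest the remaining extremal inequalities on Csikv\'ari's poset-of-trees results, which the paper simply cites as \cite[Theorem 4.6]{Csikvari} rather than re-deriving via grafting moves. Your additional step of deleting non-bridge edges to reduce the connected-graph lower bound to the spanning-tree case is a correct and standard way to recover the statement the paper attributes directly to Csikv\'ari.
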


This paper, which is intended for a wide audience, takes us on a short journey through the basics of spectral graph theory and is organized as follows. Section \ref{sec:Basic} introduces the reader to basic terminology in graph theory. Section \ref{sec:Spectra} establishes that the spectrum of any graph is real and provides several important examples. The remarkable link between graph spectra and closed walks is explored in Section \ref{sec:ClosedWalks}. The power sum symmetric polynomials and their relation with the CHS polynomials is discussed in Section \ref{sec:PowerSums}. Theorem \ref{thm:MainTheorem1} is proved in Section \ref{sec:Proof1} and Theorem \ref{thm:MainTheorem2} is proved in Section \ref{sec:Proof2}. Finally, we conclude with possible directions for future work and closing remarks.

%%%%%%%%%%%%%%%%%%%%%%%%%%%%%%%%%%%%%%%%%%%%%%%%%%%%%%%%%%%%%%%%%%%%%%%%%%%
%%%%%%%%%%%%%%%%%%%%%%%%%%%%%%%%%%%%%%%%%%%%%%%%%%%%%%%%%%%%%%%%%%%%%%%%%%%
%%%%%%%%%%%%%%%%%%%%%%%%%%%%%%%%%%%%%%%%%%%%%%%%%%%%%%%%%%%%%%%%%%%%%%%%%%%
%%%%%%%%%%%%%%%%%%%%%%%%%%%%%%%%%%%%%%%%%%%%%%%%%%%%%%%%%%%%%%%%%%%%%%%%%%%
%%%%%%%%%%%%%%%%%%%%%%%%%%%%%%%%%%%%%%%%%%%%%%%%%%%%%%%%%%%%%%%%%%%%%%%%%%%

\section{Basic graph theory}\label{sec:Basic}

Essentially, a graph is just a collection of vertices connected by paths called edges. More formally, a \emph{graph} of order $n$ is an ordered pair $(V,E)$ consisting of a set $E=E(G)$ of \emph{edges} and a set $V=V(G)$ of \emph{vertices} such that $|V|=n$. We adopt standard notation and let $m=|E(G)|$ denote the number of edges in a graph $G$. A graph is \emph{simple} if it contains no loops or multiple edges and it is \emph{connected} if it contains no isolated vertices. We refer the reader to \cite{Bollobas} for further background.

\begin{example}\label{ex:TwoGraphs}
Figure \ref{fig:Figure2} shows two graphs. Both of them have $n=4$ vertices and both are connected since none of their vertices are isolated. However, only one of the graphs is a simple graph.
\end{example}

\begin{figure}[h]

\begin{center}
\begin{tikzpicture}[main_node/.style={circle,fill=blue!20,draw,minimum size=0.4em,inner sep=1pt}, every loop/.style={}]

\node[main_node] (1) at (0, 0) {};
\node[main_node] (2) at (1,0) {};
\node[main_node] (3) at (1, 1) {};
\node[main_node] (4) at (0, 1) {};

\path[every node/.style={}]
(1) edge node {} (2)
(1) edge node {} (3) 
(1) edge node {} (4) 
(2) edge node {} (3)
(2) edge node {} (4)
(3) edge node {} (4) ;
\end{tikzpicture} 
\quad\quad\quad\quad\quad
\begin{tikzpicture}[main_node/.style={circle,fill=blue!20,draw,minimum size=0.4em,inner sep=1pt}, every loop/.style={}]

\node[main_node] (1) at (0, 0) {};
\node[main_node] (2) at (1,0) {};
\node[main_node] (3) at (1, 1) {};
\node[main_node] (4) at (0, 1) {};

\path[every node/.style={}]
(1) edge node {} (2)
(1) edge node {} (3) 
(1) edge node {} (4) 
(2) edge node {} (3)
(3) edge[loop] node {} (4) ;
\end{tikzpicture}
\caption{Two connected graphs with $n=4$ vertices. The graph on the right is not simple since one of the vertices has a loop.} 
\label{fig:Figure2} 
\end{center}
\end{figure}
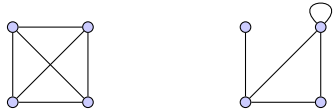

\begin{example}\label{ex:NotConnected}
Figure \ref{fig:Figure3} shows a graph with $n=5$ vertices with $m=4$ edges. The graph is simple since it has no loops or multiple edges. However, it is not connected since there is an isolated vertex.
\end{example}

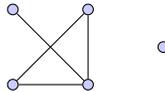
\begin{figure}[h]

\begin{center}
\begin{tikzpicture}[main_node/.style={circle,fill=blue!20,draw,minimum size=0.4em,inner sep=1pt}, every loop/.style={}]

\node[main_node] (1) at (0, 0) {};
\node[main_node] (2) at (1,0) {};
\node[main_node] (3) at (1, 1) {};
\node[main_node] (4) at (0, 1) {};
\node[main_node] (5) at (2, 0.5) {};

\path[every node/.style={}]
(1) edge node {} (2)
(1) edge node {} (3) 
(2) edge node {} (3)
(2) edge node {} (4) ;
\end{tikzpicture} 
\caption{A graph with $n=5$ vertices that is not connected.} 
\label{fig:Figure3}
\end{center}
\end{figure}

The vertices of the graphs in Examples \ref{ex:TwoGraphs} and \ref{ex:NotConnected} are not labelled and no explicit descriptions of $V$ or $E$ are given. However, it is often useful to label vertices and give explicit descriptions of $V$ and $E$. In particular, suppose $G$ is a graph of order $n$ with vertex set $V=\{v_1, v_2, \ldots, v_n\}$. Edge $e_{ij}\in E$ if and only if there is an edge connecting $v_i$ and $v_j$. We remark that $e_{ij}=e_{ji}$ since the edges in our graphs have no direction associated to them.

\begin{example}\label{ex:Path}
The graph in Figure \ref{fig:Figure4} has vertex set $V=\{v_1, v_2, v_3\}$. The edge set is $E=\{e_{12}, e_{23}\}$ because there is an edge joining $v_1$ and $v_2$ and an edge joining $v_2$ and $v_3$.
\end{example}

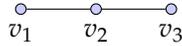
\begin{figure}[h]
\begin{center}
\begin{tikzpicture}[main_node/.style={circle,fill=blue!20,draw,minimum size=0.4em,inner sep=1pt}, every loop/.style={}]

\node[main_node, label=below: $v_1$ ] (1) at (0, 0){};
\node[main_node, label=below: $v_2$] (2) at (1,0) {};
\node[main_node, label=below: $v_3$] (3) at (2, 0) {};

\path[every node/.style={}]
(1) edge (2)
(2) edge (3) ;
\end{tikzpicture}
\caption{A connected graph with $n=3$ vertices and $m=2$ edges.} 
\label{fig:Figure4}
\end{center}
\end{figure}

%%%%%%%%%%%%%%%%%%%%%%%%%%%%%%%%%%%%%%%%%%%%%%%%%%%%%%%%%%%%%%%%%%%%%%%%%%%
%%%%%%%%%%%%%%%%%%%%%%%%%%%%%%%%%%%%%%%%%%%%%%%%%%%%%%%%%%%%%%%%%%%%%%%%%%%
%%%%%%%%%%%%%%%%%%%%%%%%%%%%%%%%%%%%%%%%%%%%%%%%%%%%%%%%%%%%%%%%%%%%%%%%%%%

\subsection{Important families of graphs}\label{sub:Families} Here and throughout we assume all graphs are simple and connected. There are several important families of such graphs. The focus of our paper, however, is on the families of paths, stars, complete and complete bipartite graphs. We define these families below.\\

\noindent\underline{\sc Path graphs}. The \emph{path graph} of order $n$ is denoted $P_n$ and is defined according to the sets $V=\{v_1, v_2, \ldots, v_n\}$ and $E=\{e_{12}, e_{23}, \cdots, e_{n-1 n}\}$. A few examples of paths are shown in Figure \ref{fig:Figure5}. \\

\noindent\underline{\sc Complete graphs}. The \emph{complete graph} $K_n$ is the graph of order $n$ with the maximum possible number of edges. In particular, $K_n$ has $m={n\choose 2}$ edges. Figure \ref{fig:Figure6} shows complete graphs $K_n$ of various size. \\

\noindent\underline{\sc Complete bipartite graphs}. The \emph{complete bipartite graph} $K_{m,n}$ has $m+n$ vertices separated into two sets of size $m$ and $n$. No edges connect vertices belonging to the same set and there exists and edge between any two vertices belonging to different sets. Figure \ref{fig:Figure7} shows the complete bipartite graphs $K_{2,2}$ and $K_{2,3}$. \\

\noindent\underline{\sc Star graphs}. The \emph{star graph} with $n-1$ leaves is denoted $S_n$ and consists of a single central vertex with $n-1$ edges connected to it. In particular, $S_n=K_{n-1,1}$. Figure \ref{fig:Figure8} shows stars $S_n$ of various size.\\

\begin{figure}[h]
\begin{center}
{\footnotesize
\begin{tikzpicture}[main_node/.style={circle,fill=blue!20,draw,minimum size=0.5em,inner sep=1pt}, every loop/.style={}]

\node[main_node] (1) at (0, 0){};
\node[main_node] (2) at (0.75,0) {};

\path[every node/.style={}]
(1) edge (2) ;
\end{tikzpicture}
\quad\quad\quad\quad
\begin{tikzpicture}[main_node/.style={circle,fill=blue!20,draw,minimum size=0.5em,inner sep=1pt}, every loop/.style={}]

\node[main_node] (1) at (0, 0){};
\node[main_node] (2) at (0.75,0) {};
\node[main_node] (3) at (1.5, 0) {};

\path[every node/.style={}]
(1) edge (2)
(2) edge (3) ;
\end{tikzpicture}
\quad\quad\quad\quad
\begin{tikzpicture}[main_node/.style={circle,fill=blue!20,draw,minimum size=0.5em,inner sep=1pt}, every loop/.style={}]

\node[main_node] (1) at (0, 0){};
\node[main_node] (2) at (0.75,0) {};
\node[main_node] (3) at (1.5, 0) {};
\node[main_node] (4) at (2.25, 0) {};

\path[every node/.style={}]
(1) edge (2)
(2) edge (3)
(3) edge (4) ;
\end{tikzpicture}
}
\caption{Various paths $P_n$.} 
\label{fig:Figure5}
\end{center}
\end{figure}
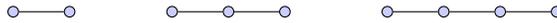

\begin{figure}[h]
\begin{center}
{\footnotesize
\begin{tikzpicture}[main_node/.style={circle,fill=blue!20,draw,minimum size=0.5em,inner sep=1pt}, every loop/.style={}]

\node[main_node] (1) at (-0.65,-0.375) {};
\node[main_node] (2) at (0, 0.75) {};
\node[main_node] (3) at (0.65, -0.375) {};

\path[every node/.style={}]
(1) edge (2)
(1) edge (3)
(2) edge (3) ;
\end{tikzpicture}
\quad\quad\quad\quad
\begin{tikzpicture}[main_node/.style={circle,fill=blue!20,draw,minimum size=0.5em,inner sep=1pt}, every loop/.style={}]

\node[main_node] (1) at (-0.53,-0.53) {};
\node[main_node] (2) at (-0.53, 0.53) {};
\node[main_node] (3) at (0.53, -0.53) {};
\node[main_node] (4) at (0.53, 0.53) {};

\path[every node/.style={}]
(1) edge (2)
(1) edge (3)
(1) edge (4) 
(2) edge (3)
(2) edge (4)
(3) edge (4) ;
\end{tikzpicture}
\quad\quad\quad\quad
\begin{tikzpicture}[main_node/.style={circle,fill=blue!20,draw,minimum size=0.5em,inner sep=1pt}, every loop/.style={}]

\node[main_node] (1) at (0, 0.75) {};
\node[main_node] (2) at (0.71, 0.23) {};
\node[main_node] (3) at (-0.71, 0.23) {};
\node[main_node] (4) at (-0.441, -0.607) {};
\node[main_node] (5) at (0.441, -0.607) {};

\path[every node/.style={}]
(1) edge (2)
(1) edge (3)
(1) edge (4) 
(1) edge (5)
(2) edge (3)
(2) edge (4) 
(2) edge (5)
(3) edge (4)
(3) edge (5) 
(4) edge (5);
\end{tikzpicture}
}
\caption{Various complete graphs $K_n$.} 
\label{fig:Figure6}
\end{center}
\end{figure}
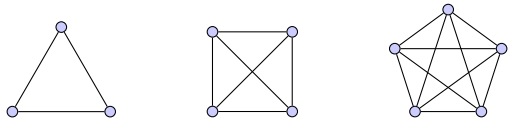

\begin{figure}[h]
\begin{center}
{\footnotesize
\begin{tikzpicture}[main_node/.style={circle,fill=blue!20,draw,minimum size=0.5em,inner sep=1pt}, every loop/.style={}]

\node[main_node] (1) at (0, 0) {};
\node[main_node] (2) at (1,0) {};
\node[main_node] (3) at (0,1) {};
\node[main_node] (4) at (1,1) {};

\path[every node/.style={}]
(1) edge (3)
(1) edge (4)
(2) edge (3) 
(2) edge (4);
\end{tikzpicture}\quad\quad\quad\quad
\begin{tikzpicture}[main_node/.style={circle,fill=blue!20,draw,minimum size=0.5em,inner sep=1pt}, every loop/.style={}]

\node[main_node] (1) at (-1,0) {};
\node[main_node] (2) at (0, 0) {};
\node[main_node] (3) at (1,0) {};
\node[main_node] (4) at (-0.5,1) {};
\node[main_node] (5) at (0.5,1) {};

\path[every node/.style={}]
(4) edge (1)
(4) edge (2)
(4) edge (3) 
(5) edge (1)
(5) edge (2)
(5) edge (3) ;
\end{tikzpicture}
}
\caption{The complete bipartite graphs $K_{2,2}$ (left) and $K_{2,3}$ (right).} 
\label{fig:Figure7}
\end{center}
\end{figure}
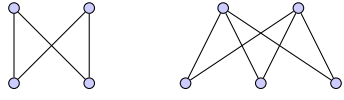

\begin{figure}[h]
\begin{center}
{\footnotesize
\begin{tikzpicture}[main_node/.style={circle,fill=blue!20,draw,minimum size=0.5em,inner sep=1pt}, every loop/.style={}]

\node[main_node] (1) at (0, 0){};
\node[main_node] (2) at (-0.65,-0.375) {};
\node[main_node] (3) at (0, 0.75) {};
\node[main_node] (4) at (0.65, -0.375) {};

\path[every node/.style={}]
(1) edge (2)
(1) edge (3)
(1) edge (4) ;
\end{tikzpicture}
\quad\quad\quad\quad
\begin{tikzpicture}[main_node/.style={circle,fill=blue!20,draw,minimum size=0.5em,inner sep=1pt}, every loop/.style={}]

\node[main_node] (1) at (-0.53,-0.53) {};
\node[main_node] (2) at (-0.53, 0.53) {};
\node[main_node] (3) at (0.53, -0.53) {};
\node[main_node] (4) at (0.53, 0.53) {};
\node[main_node] (5) at (0,0) {};

\path[every node/.style={}]
(5) edge (1)
(5) edge (2)
(5) edge (3)
(5) edge (4);

\end{tikzpicture}
\quad\quad\quad\quad
\begin{tikzpicture}[main_node/.style={circle,fill=blue!20,draw,minimum size=0.5em,inner sep=1pt}, every loop/.style={}]

\node[main_node] (1) at (0, 0.75) {};
\node[main_node] (2) at (0.71, 0.23) {};
\node[main_node] (3) at (-0.71, 0.23) {};
\node[main_node] (4) at (-0.441, -0.607) {};
\node[main_node] (5) at (0.441, -0.607) {};
\node[main_node] (6) at (0,0) {};

\path[every node/.style={}]
(6) edge (1)
(6) edge (2)
(6) edge (3)
(6) edge (4)
(6) edge (5);

\end{tikzpicture}
}
\caption{Various stars $S_n$.} 
\label{fig:Figure8} 
\end{center}
\end{figure}
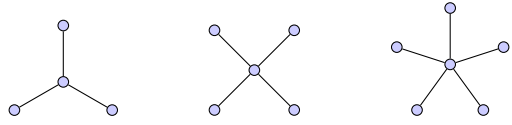

%%%%%%%%%%%%%%%%%%%%%%%%%%%%%%%%%%%%%%%%%%%%%%%%%%%%%%%%%%%%%%%%%%%%%%%%%%%
%%%%%%%%%%%%%%%%%%%%%%%%%%%%%%%%%%%%%%%%%%%%%%%%%%%%%%%%%%%%%%%%%%%%%%%%%%%
%%%%%%%%%%%%%%%%%%%%%%%%%%%%%%%%%%%%%%%%%%%%%%%%%%%%%%%%%%%%%%%%%%%%%%%%%%%
%%%%%%%%%%%%%%%%%%%%%%%%%%%%%%%%%%%%%%%%%%%%%%%%%%%%%%%%%%%%%%%%%%%%%%%%%%%
%%%%%%%%%%%%%%%%%%%%%%%%%%%%%%%%%%%%%%%%%%%%%%%%%%%%%%%%%%%%%%%%%%%%%%%%%%%

\section{Graph spectra}\label{sec:Spectra}

Suppose $G$ is a graph of order $n$. The \emph{adjacency matrix} of $G$ is the $n\times n$ matrix $A(G)$ whose entries are defined by setting $[A(G)]_{ij}=1$ if vertices $v_i$ and $v_j$ are connected by an edge and $[A(G)]_{ij}= 0$ otherwise. We sometimes write $A=A(G)$ when $G$ is understood. The \emph{spectrum} $\operatorname{spec}(G)$ of a graph $G$ is the multiset of eigenvalues of its adjacency matrix. The fact that adjacency matrices are real and symmetric implies that their eigenvalues are real numbers. In particular, the spectrum of a graph is always real. We establish this fact in Subsection \ref{sub:Hermitian}.

\begin{example}\label{ex:Spectrum}

Consider once again the graph $G$ of Example \ref{ex:Path}. The adjacency matrix of $G$ is the $3\times 3$ symmetric matrix
\begin{equation*}
A(G)=\begin{bmatrix}
0 & 1 & 0\\
1 & 0 & 1\\
0 & 1 & 0
\end{bmatrix}.
\end{equation*} Moreover, the characteristic polynomial of $A(G)$ is given by
\begin{align*}
\varphi_{A(G)}(z)=\det\begin{bmatrix} z& -1 & 0\\
-1& z& -1\\
0 & -1 & z \end{bmatrix}=z^3-2z=z(z-\sqrt{2})(z+\sqrt{2}).
\end{align*} Therefore, the spectrum of $A(G)$ is given by $\operatorname{spec}(G)=\{\sqrt{2}, 0, -\sqrt{2}\}$.
\end{example}

\begin{example}
Consider the complete bipartite graph $K_{2,2}$ of Figure \ref{fig:Figure7}. The adjacency matrix of $K_{2,2}$ is
\begin{equation*}
A(K_{2,2})=\begin{bmatrix}
0 & 0 & 1 & 1 \\
0 & 0 & 1 & 1 \\
1 & 1 & 0 & 0 \\
1 & 1 & 0 & 0 
\end{bmatrix}.
\end{equation*} Moreover, the characteristic polynomial of $A(K_{2,2})$ is given by
\begin{align*}
\varphi_{A(K_{2,2})}(z)=\det\begin{bmatrix} z & 0 & -1 & -1 \\
0 & z & -1 & -1 \\
-1 & -1 & z & 0 \\
-1 & -1 & 0 & z \end{bmatrix}=z^4-4z^2=z^2(z+2)(z-2).
\end{align*} Therefore, the spectrum of $A(K_{2,2})$ is given by $\operatorname{spec}(K_{2,2})=\{2, 0, 0, -2\}$.
\end{example}

%%%%%%%%%%%%%%%%%%%%%%%%%%%%%%%%%%%%%%%%%%%%%%%%%%%%%%%%%%%%%%%%%%%%%%%%%%%
%%%%%%%%%%%%%%%%%%%%%%%%%%%%%%%%%%%%%%%%%%%%%%%%%%%%%%%%%%%%%%%%%%%%%%%%%%%
%%%%%%%%%%%%%%%%%%%%%%%%%%%%%%%%%%%%%%%%%%%%%%%%%%%%%%%%%%%%%%%%%%%%%%%%%%%

\subsection{The space of Hermitian matrices}\label{sub:Hermitian} Suppose $A\in \mathrm{M}_n$. The \emph{Hermitian conjugate} of $A$ is the matrix $A^{\ast}$ obtained by taking the complex conjugate of each entry in the transpose $A^{\tiny\mbox{T}}$ of $A$. An element $A\in \mathrm{M}_n$ is called \emph{Hermitian} if $A^{\ast}=A$ and the set of all $n\times n$ Hermitian matrices is denoted $\mathrm{H}_n$. We remark that $\mathrm{H}_n$ is a vector space over $\mathbb{R}$ of dimension $n^2$.

\begin{example}
The matrix $A=\begin{bmatrix} 1 & i\\ -i & 2\end{bmatrix}$ is Hermitian because $A^{\ast}=A$.
\end{example}

Suppose $G$ is a graph. Its adjacency matrix $A(G)$ is invariant under transposition. Moreover, the entries of $A(G)$ are invariant under complex conjugation since they are real numbers. Therefore, the adjacency matrix of a graph is always a Hermitian matrix. The following result can be found, for instance, in \cite[Theorem 12.6.1 (b)]{GarciaHorn} and establishes that the spectrum of any graph is indeed real. 

\begin{proposition}
The eigenvalues of a Hermitian matrix are real.
\end{proposition}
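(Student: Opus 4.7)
The plan is to use the standard inner-product argument: pin $\lambda$ down as a ratio of two quantities that are forced to be real. Concretely, I would fix an eigenvalue $\lambda \in \mathbb{C}$ of $A \in \mathrm{H}_n$ together with an eigenvector $v \in \mathbb{C}^n \setminus \{0\}$ satisfying $Av = \lambda v$, and then examine the scalar $v^{\ast} A v$ in two different ways.

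First I would compute $v^{\ast} A v = v^{\ast}(\lambda v) = \lambda (v^{\ast} v) = \lambda \|v\|^2$, where $\|v\|^2 = v^{\ast} v$ is the standard (strictly positive) squared Euclidean norm. Next, I would show that $v^{\ast} A v$ is its own complex conjugate. Since $v^{\ast} A v$ is a $1 \times 1$ matrix it equals its own transpose, and taking the Hermitian conjugate gives
\begin{equation*}
\overline{v^{\ast} A v} = (v^{\ast} A v)^{\ast} = v^{\ast} A^{\ast} v = v^{\ast} A v,
\end{equation*}
using $A^{\ast} = A$ in the last step. Hence $v^{\ast} A v \in \mathbb{R}$. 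Combining the two expressions yields $\lambda = (v^{\ast} A v)/\|v\|^2 \in \mathbb{R}$, as desired.

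There is no real obstacle here, since the argument is purely formal once one has the Hermitian identity $(v^{\ast} A v)^{\ast} = v^{\ast} A v$. The only thing worth flagging carefully is that $\|v\|^2 \neq 0$ because $v$ is assumed to be a nonzero eigenvector, which is what allows division. Everything else is a direct manipulation of the inner product, so the proof should be short (a few lines) and self-contained, requiring nothing beyond the definition of the Hermitian conjugate given earlier in Subsection~\ref{sub:Hermitian}.
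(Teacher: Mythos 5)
Your proof is correct and is essentially the same inner-product argument as the paper's: both exploit $A^{\ast}=A$ to show $v^{\ast}Av$ equals its own conjugate and then recover $\lambda$ from $v^{\ast}Av=\lambda\,v^{\ast}v$. The only cosmetic difference is that the paper normalizes $\langle v,v\rangle=1$ up front whereas you divide by $\|v\|^2$ at the end, correctly noting it is nonzero.
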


\begin{proof}
Suppose $A$ is Hermitian and $\mathbf{v}$ is an eigenvector for $A$ with eigenvalue $\lambda$. By scaling if necessary, we may assume $\langle\mathbf{v}, \mathbf{v}\rangle=1$. Observe 
\begin{align*}
\lambda = \langle \lambda \mathbf{v}, \mathbf{v} \rangle = \langle A\mathbf{v},\mathbf{v} \rangle = \langle \mathbf{v}, A^*\mathbf{v} \rangle = \langle \mathbf{v}, A\mathbf{v} \rangle = \langle \mathbf{v}, \lambda \mathbf{v} \rangle = \overline{\lambda}.
\end{align*} Therefore, $\lambda = \overline{\lambda}$, which implies $\lambda \in \mathbb{R}$. Here, $\langle\cdot, \cdot \rangle$ denotes the standard inner product on $\mathbb{C}^n$ defined by $\langle \mathbf{x} , \mathbf{y}\rangle=x_1\overline{y}_1+x_2\overline{y}_2+\cdots + x_n\overline{y}_n$ for vectors $\mathbf{x}=(x_1, x_2, \ldots, x_n)$ and $\mathbf{y}=(y_1, y_2, \ldots, y_n)$. 
\end{proof}

%%%%%%%%%%%%%%%%%%%%%%%%%%%%%%%%%%%%%%%%%%%%%%%%%%%%%%%%%%%%%%%%%%%%%%%%%%%
%%%%%%%%%%%%%%%%%%%%%%%%%%%%%%%%%%%%%%%%%%%%%%%%%%%%%%%%%%%%%%%%%%%%%%%%%%%
%%%%%%%%%%%%%%%%%%%%%%%%%%%%%%%%%%%%%%%%%%%%%%%%%%%%%%%%%%%%%%%%%%%%%%%%%%%
\subsection{CHS norms of important graphs} We consider the CHS norms for the families of paths, stars and complete graphs appearing in Subsection \ref{sub:Families}. The CHS norms of these families are important because they represent the extremal cases in Theorem \ref{thm:MainTheorem2}. We keep the convention of writing eigenvalues in nonincreasing order. Moreover, we write $\lambda_i^{[m]}$ when $\lambda_i$ has multiplicity $m$ with the convention $\lambda_i=\lambda_i^{[1]}$. The following results are found in \cite[Section 1.4]{Brouwer}.

\begin{proposition}\label{prop:Spectra} The spectra for the families $P_n$, $K_n$ and $K_{m,n}$ are as follows:\\

\noindent (a) $\operatorname{spec}(P_n)=\{2\cos\big( \frac{k\pi}{n+1}\big)\,:\,k=1,2,\ldots, n\}.$\\

\noindent (b) $\operatorname{spec}(K_n)=\{n-1, (-1)^{[n-1]}\}$.\\

\noindent (c) $\operatorname{spec}(K_{m,n})=\{\sqrt{mn}, 0^{[n+m-2]}, -\sqrt{mn}\}.$ 
\end{proposition}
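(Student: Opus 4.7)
The plan is to handle the three families separately, in order of increasing difficulty. For each, I would write down the adjacency matrix directly from the definitions in Subsection \ref{sub:Families}, and then either invoke known facts about familiar matrices or exhibit eigenvectors explicitly.

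For part (b), observe that $A(K_n) = J_n - I_n$, where $J_n$ is the $n \times n$ all-ones matrix. Since $J_n$ has rank one, its spectrum is $\{n, 0^{[n-1]}\}$: the all-ones vector is an eigenvector for $n$, and the hyperplane of vectors summing to zero is the $0$-eigenspace. Subtracting $I_n$ shifts every eigenvalue by $-1$ and preserves the eigenvectors, which yields $\operatorname{spec}(K_n) = \{n-1, (-1)^{[n-1]}\}$.

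For part (c), after ordering the vertex set so that one part comes first, the adjacency matrix of $K_{m,n}$ takes the block form
\[
A(K_{m,n}) = \begin{bmatrix} 0 & J_{m,n} \\ J_{n,m} & 0 \end{bmatrix},
\]
where $J_{m,n}$ is the $m \times n$ all-ones matrix. Any vector of the form $(\mathbf{x}, \mathbf{0})$ with $\mathbf{x} \in \mathbb{R}^m$ summing to zero, or $(\mathbf{0}, \mathbf{y})$ with $\mathbf{y} \in \mathbb{R}^n$ summing to zero, lies in the kernel, producing $m+n-2$ linearly independent eigenvectors for eigenvalue $0$. The two remaining eigenvalues arise on the two-dimensional invariant subspace of vectors of the form $(\alpha \mathbf{1}_m, \beta \mathbf{1}_n)$, on which $A(K_{m,n})$ acts as the $2\times 2$ matrix $\bigl[\begin{smallmatrix} 0 & n \\ m & 0 \end{smallmatrix}\bigr]$, whose eigenvalues are $\pm\sqrt{mn}$. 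Assembling these pieces gives the claimed spectrum.

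Part (a) is the main obstacle and requires more delicate analysis. The adjacency matrix of $P_n$ is tridiagonal with zeros on the diagonal and ones on the sub- and super-diagonals, so the eigenvalue equation $A \mathbf{v} = \lambda \mathbf{v}$ becomes the three-term recurrence $v_{k-1} + v_{k+1} = \lambda v_k$ subject to the boundary conditions $v_0 = v_{n+1} = 0$. Guided by the general solution of a second-order linear recurrence with constant coefficients, I would try the ansatz $v_k = \sin(k\theta)$; the sum-to-product identity
\[
\sin((k-1)\theta) + \sin((k+1)\theta) = 2\cos(\theta)\sin(k\theta)
\]
then forces $\lambda = 2\cos\theta$, while the boundary conditions $\sin 0 = 0$ and $\sin((n+1)\theta) = 0$ force $\theta = k\pi/(n+1)$ for $k = 1, 2, \ldots, n$. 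These produce $n$ candidate eigenvalues $2\cos(k\pi/(n+1))$, and they are pairwise distinct because $\cos$ is strictly decreasing on $(0,\pi)$ and the angles $k\pi/(n+1)$ all lie in that interval. Since $A(P_n)$ is Hermitian of size $n$, these $n$ distinct values exhaust its spectrum.
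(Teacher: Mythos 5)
Your proposal is correct and complete. Note that the paper itself offers no proof of this proposition: it simply cites \cite[Section 1.4]{Brouwer}, so there is no in-paper argument to compare against. Your three arguments are the standard ones and each is sound. For (b), the rank-one decomposition $A(K_n)=J_n-I_n$ with the eigenvalue shift is exactly right. For (c), the kernel vectors $(\mathbf{x},\mathbf{0})$ and $(\mathbf{0},\mathbf{y})$ with zero-sum parts give $m+n-2$ independent null vectors, and the reduction to the $2\times 2$ matrix $\bigl[\begin{smallmatrix}0&n\\ m&0\end{smallmatrix}\bigr]$ on the span of $(\mathbf{1}_m,\mathbf{0})$ and $(\mathbf{0},\mathbf{1}_n)$ correctly produces $\pm\sqrt{mn}$; the count $(m+n-2)+2=m+n$ confirms you have the whole spectrum. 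For (a), the three-term recurrence with boundary conditions $v_0=v_{n+1}=0$ and the ansatz $v_k=\sin(k\theta)$ is the classical treatment; the only point worth making explicit is that for each $j=1,\dots,n$ the resulting vector is nonzero (e.g.\ $v_1=\sin(j\pi/(n+1))\neq 0$ since $0<j\pi/(n+1)<\pi$), so each candidate is a genuine eigenvector, and $n$ distinct eigenvalues of an $n\times n$ matrix necessarily exhaust the spectrum with multiplicity one each. This is a complete, self-contained proof of a statement the paper leaves to the literature.
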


Computing the CHS norms of $K_n$ and $K_{m,n}$ (and $S_n$) is a matter of appealing to the generating function \eqref{eq:MGF} for the CHS polynomials. The CHS norms of $K_{m,n}$ have a particularly nice expression. Evaluating the CHS polynomials on the spectrum of $P_n$ represents a difficult task, however. The CHS norms of $P_n$ therefore appear to have no simple expressions. We briefly revisit the CHS norms of paths in Section \ref{sec:PowerSums}.

\begin{proposition}\label{prop:Norms}
Suppose $d\geq 2$ is even. The CHS norms satisfy the following identities:\\

\noindent (a) $\norm{K_n}_d^d=\displaystyle \sum_{k=0}^d (-1)^k(n-1)^{d-k}{k+n-2\choose n-2} ,$\\

\noindent (b) $\norm{K_{m,n}}_d=\sqrt{mn}.$ 
\end{proposition}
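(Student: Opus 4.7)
Both parts follow from the generating function \eqref{eq:MGF} for the CHS polynomials, applied to the spectra recorded in Proposition \ref{prop:Spectra}. The problem therefore reduces in each case to coefficient extraction in a rational power series.

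For part (b), plug in the spectrum $\{\sqrt{mn},0^{[m+n-2]},-\sqrt{mn}\}$ of $K_{m,n}$. The $m+n-2$ zero eigenvalues contribute trivial factors $\tfrac{1}{1-0\cdot t}=1$, while the remaining two nonzero eigenvalues combine via difference of squares into
\begin{equation*}
\frac{1}{(1-\sqrt{mn}\,t)(1+\sqrt{mn}\,t)}=\frac{1}{1-mn\,t^{2}}=\sum_{k=0}^{\infty}(mn)^{k}t^{2k}.
\end{equation*}
Since $d$ is even, reading off the coefficient of $t^{d}$ gives $\norm{K_{m,n}}_{d}^{d}=(mn)^{d/2}$, and taking $d$-th roots immediately yields the claimed value $\sqrt{mn}$. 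No deeper identities are needed here.

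For part (a), plug the spectrum $\{n-1,(-1)^{[n-1]}\}$ of $K_n$ into \eqref{eq:MGF} to obtain
\begin{equation*}
\sum_{d=0}^{\infty}h_{d}\bigl(n-1,-1,\ldots,-1\bigr)\,t^{d}=\frac{1}{\bigl(1-(n-1)t\bigr)(1+t)^{n-1}}.
\end{equation*}
From here the plan is to expand the first factor as the geometric series $\sum_{j\geq 0}(n-1)^{j}t^{j}$, apply the generalized binomial theorem to the second factor to write $(1+t)^{-(n-1)}$ as an alternating power series whose coefficients are binomial numbers, and then take the Cauchy product. Identifying the coefficient of $t^{d}$ produces a finite alternating sum of the form $\sum_{k=0}^{d}(-1)^{k}(n-1)^{d-k}\,c_{k,n}$, where $c_{k,n}$ is the explicit binomial coefficient arising from the generalized binomial expansion; this matches the closed form asserted in the proposition.

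The only real obstacle is bookkeeping: pinning down the generalized binomial coefficient in the expansion of $(1+t)^{-(n-1)}$ in the exact form claimed, and checking it against small cases (for instance, against $\norm{K_3}_{4}^{4}=9$ and $h_{2}(3,-1,-1,-1)$, which can be computed by hand from the definitions). A useful alternative route that provides an internal sanity check is to expand the rational generating function by partial fractions into a single term $\tfrac{A}{1-(n-1)t}$ plus contributions $\tfrac{B_{j}}{(1+t)^{j}}$ for $1\leq j\leq n-1$; extracting coefficients piecewise then yields the same closed form and makes transparent the separate contributions of the distinct eigenvalues.
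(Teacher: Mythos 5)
Your argument is essentially identical to the paper's proof: both parts substitute the spectra from Proposition \ref{prop:Spectra} into the generating function \eqref{eq:MGF}, with part (b) handled by the difference-of-squares collapse to a geometric series in $t^2$ and part (a) by expanding $\frac{1}{1-(n-1)t}$ geometrically, expanding $(1+t)^{-(n-1)}$ via the generalized binomial theorem to get coefficients $(-1)^k\binom{k+n-1}{n-1}$, and taking the Cauchy product. The proposal is correct; the only remaining step is writing out the binomial coefficient explicitly, which is exactly the bookkeeping you flag.
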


\begin{proof}
Proposition \ref{prop:Spectra} (b) and the generating function of \eqref{eq:MGF} imply
\begin{align*}
\sum_{d=0}^{\infty} h_d(n-1, -1, -1, \ldots, -1)t^d=\frac{1}{1-(n-1)t}\cdot\Big( \frac{1}{1+t}\Big)^{n-1}.
\end{align*}Expand the factors in the right side into power series in $t$ to conclude
\begin{align*}
\frac{1}{1-(n-1)t}\cdot\Big( \frac{1}{t+1}\Big)^{n-1}=\sum_{d=0}^{\infty}(n-1)^dt^d \cdot\sum_{d=0}^{\infty}(-1)^d{d+n-2\choose n-2}t^d.
\end{align*} The Cauchy product formula for infinite power series ensures
\begin{align*}
\frac{1}{1-(n-1)t}\cdot\Big( \frac{1}{t+1}\Big)^{n-1}=\sum_{d=0}^{\infty}\Bigg( \sum_{k=0}^d (-1)^k(n-1)^{d-k}{k+n-2\choose n-2} \Bigg)t^d.
\end{align*} Compare coefficients to conclude (a). Proposition \ref{prop:Spectra} (c) and the generating function of \eqref{eq:MGF} imply
\begin{align*}
\sum_{d=0}^{\infty} h_d(\sqrt{mn}, 0, \ldots, 0, -\sqrt{mn})t^d=\frac{1}{1-\sqrt{mn}t}\cdot\frac{1}{1+\sqrt{mn}t} = \frac{1}{1-{mn}t^2}.
\end{align*} Expand the ride side of this relation into a geometric series to conclude
\begin{align}
\sum_{d=0}^{\infty} h_d(\sqrt{mn}, 0, \ldots, 0, -\sqrt{mn})t^d = \sum_{d=0}^{\infty} (mn)^dt^{2d}
\end{align} Compare the coefficients of each side to obtain the relation
\begin{align*}
h_d(\sqrt{mn}, 0, \ldots, 0 , -\sqrt{mn}) =
\begin{cases}
0 & d \text{ odd}, \\
(mn)^{\frac{d}{2}} & d \text{ even}.
\end{cases}
\end{align*} We conclude that $\norm{K_{m,n}}_d = \big((mn)^{\frac{d}{2}}\big)^{\frac{1}{d}} = \sqrt{mn}$ whenever $d\geq 2$ is even.
\end{proof}

\begin{corollary}
Suppose $d \geq 2$ is even. The CHS norm of $S_n$ satisfies $\norm{S_n}_d=\sqrt{n-1}$.
\end{corollary}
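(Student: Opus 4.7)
The plan is to exploit the identification $S_n = K_{n-1,1}$, which has already been noted in the discussion following the definition of star graphs in Subsection \ref{sub:Families} and reiterated just before Proposition \ref{prop:Spectra}. Given this identification, the corollary is an immediate specialization of Proposition \ref{prop:Norms}(b).

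Concretely, I would first invoke the identity $S_n = K_{n-1,1}$ to write $\norm{S_n}_d = \norm{K_{n-1,1}}_d$. Then, applying Proposition \ref{prop:Norms}(b) with the parameters $m = n-1$ and (bipartite) $n = 1$, I obtain
\begin{equation*}
\norm{S_n}_d = \norm{K_{n-1,1}}_d = \sqrt{(n-1)\cdot 1} = \sqrt{n-1},
\end{equation*}
which is valid for every even $d \geq 2$. No further work is needed.

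There is essentially no obstacle here: the entire content of the corollary is packaged inside Proposition \ref{prop:Norms}(b). The only minor subtlety worth flagging explicitly (if desired) is the consistency check that $S_n$ has $n$ vertices while $K_{n-1,1}$ has $(n-1) + 1 = n$ vertices, so the two graphs indeed coincide. One could alternatively redo the generating function computation directly from $\operatorname{spec}(S_n) = \{\sqrt{n-1}, 0^{[n-2]}, -\sqrt{n-1}\}$, which is Proposition \ref{prop:Spectra}(c) at $(m,n) = (n-1,1)$, but this would just reproduce the proof of Proposition \ref{prop:Norms}(b) verbatim.
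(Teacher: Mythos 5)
Your proposal is correct and is essentially identical to the paper's proof: both identify $S_n = K_{n-1,1}$ and apply Proposition \ref{prop:Norms}(b) with $m = n-1$ and the second parameter equal to $1$ to get $\sqrt{(n-1)\cdot 1} = \sqrt{n-1}$. No issues.
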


\begin{proof}
$S_n=K_{n-1,1}$. Proposition \ref{prop:Norms} (b) therefore ensures $\norm{S_n}_d=\sqrt{n-1}$.
\end{proof}

%%%%%%%%%%%%%%%%%%%%%%%%%%%%%%%%%%%%%%%%%%%%%%%%%%%%%%%%%%%%%%%%%%%%%%%%%%%
%%%%%%%%%%%%%%%%%%%%%%%%%%%%%%%%%%%%%%%%%%%%%%%%%%%%%%%%%%%%%%%%%%%%%%%%%%%
%%%%%%%%%%%%%%%%%%%%%%%%%%%%%%%%%%%%%%%%%%%%%%%%%%%%%%%%%%%%%%%%%%%%%%%%%%%
%%%%%%%%%%%%%%%%%%%%%%%%%%%%%%%%%%%%%%%%%%%%%%%%%%%%%%%%%%%%%%%%%%%%%%%%%%%
%%%%%%%%%%%%%%%%%%%%%%%%%%%%%%%%%%%%%%%%%%%%%%%%%%%%%%%%%%%%%%%%%%%%%%%%%%%
\section{Closed walks on graphs}\label{sec:ClosedWalks}

Suppose $G$ is a graph of order $n$ with vertex set $V=\{v_1, v_2, \ldots, v_n\}$ and edge set $E$. A \emph{walk} on $G$ is a sequence 
\begin{align*}
v_{i_1}e_{i_1i_2}v_{i_2}e_{i_2i_3}v_{i_3}e_{i_3i_4}\cdots e_{i_{k}i_{k+1}}v_{i_{k+1}}
\end{align*} of vertices and edges. The positive integer $k$ is called the \emph{length} of the walk. The vertices $v_{i_1}$ and $v_{i_{k+1}}$ are called the \emph{starting} and \emph{ending} vertices of the walk, respectively. A walk is \emph{closed} if the starting and ending vertices coincide.

\begin{example}
Consider the complete graph $K_4$. Figure \ref{fig:Figure9} shows the walk of length 2 defined by $v_1e_{13}v_3e_{32}v_2$. The arrows indicate the direction of the walk.
\end{example}

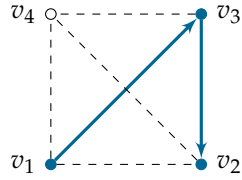
\begin{figure}[h]
\begin{center}
\begin{tikzpicture}[main_node/.style={circle,fill=white!20,draw,minimum size=0.4em,inner sep=1pt}, every loop/.style={}]

\node[main_node, label=left: $v_1$, color=MidnightBlue ] (1) at (0, 0) {};
\node[main_node, label=right: $v_2$, color=MidnightBlue ] (2) at (2,0) {};
\node[main_node, label=right: $v_3$, color=MidnightBlue ] (3) at (2, 2) {};
\node[main_node, label=left: $v_4$] (4) at (0, 2) {};

\path[every node/.style={}]
(1) edge[style=dashed] node {} (2)
(1) edge[color=MidnightBlue, style={->,> = latex'}, line width = 1.2pt] node {} (3) 
(1) edge[style=dashed] node {} (4) 
(2) edge[color=MidnightBlue, style={<-,> = latex'}, line width = 1.2pt] node {} (3)
(2) edge[style=dashed] node {} (4)
(3) edge[style=dashed] node {} (4) ;
\end{tikzpicture} 
\caption{A walk of length 2 on $K_4$.} 
\label{fig:Figure9}
\end{center}
\end{figure}

\begin{example}
Consider again the complete graph $K_4$. Figure \ref{fig:Figure10} shows two closed walks of length 3. Each walk touches the same vertices yet the walks are different. The starting vertices are colored pink to help distinguish between the walks.
\end{example}

\begin{figure}[h]
\begin{center}
\begin{tikzpicture}[main_node/.style={circle,fill=white!20,draw,minimum size=0.4em,inner sep=1pt}, every loop/.style={}]

\node[main_node, label=left: $v_1$, color=WildStrawberry ] (1) at (0, 0) {};
\node[main_node, label=right: $v_2$, color=MidnightBlue ] (2) at (2,0) {};
\node[main_node, label=right: $v_3$, color=MidnightBlue ] (3) at (2, 2) {};
\node[main_node, label=left: $v_4$] (4) at (0, 2) {};

\path[every node/.style={}]
(1) edge[color=MidnightBlue, style={<-,> = latex'}, line width = 1.2pt] node {} (2)
(1) edge[color=MidnightBlue, style={->,> = latex'}, line width = 1.2pt] node {} (3) 
(1) edge[style=dashed] node {} (4) 
(2) edge[color=MidnightBlue, style={<-,> = latex'}, line width = 1.2pt] node {} (3)
(2) edge[style=dashed] node {} (4)
(3) edge[style=dashed] node {} (4) ;
\end{tikzpicture}
\quad\quad\quad
\begin{tikzpicture}[main_node/.style={circle,fill=white!20,draw,minimum size=0.4em,inner sep=1pt}, every loop/.style={}]

\node[main_node, label=left: $v_1$, color=MidnightBlue ] (1) at (0, 0) {};
\node[main_node, label=right: $v_2$, color=MidnightBlue ] (2) at (2,0) {};
\node[main_node, label=right: $v_3$, color=WildStrawberry ] (3) at (2, 2) {};
\node[main_node, label=left: $v_4$] (4) at (0, 2) {};

\path[every node/.style={}]
(1) edge[color=MidnightBlue, style={<-,> = latex'}, line width = 1.2pt] node {} (2)
(1) edge[color=MidnightBlue, style={->,> = latex'}, line width = 1.2pt] node {} (3) 
(1) edge[style=dashed] node {} (4) 
(2) edge[color=MidnightBlue, style={<-,> = latex'}, line width = 1.2pt] node {} (3)
(2) edge[style=dashed] node {} (4)
(3) edge[style=dashed] node {} (4) ;
\end{tikzpicture}
\caption{Two distinct closed walks of length 3 on $K_4$.} 
\label{fig:Figure10}
\end{center}
\end{figure}
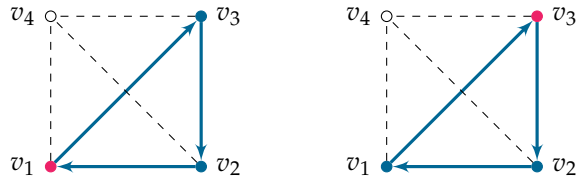

\begin{example}
A walk can have repeated edges. Figure \ref{fig:Figure11} shows the closed walk $v_1e_{13}v_3e_{31}v_1$ of length 2 on $K_4$. In particular, $e_{13}=e_{31}$ appears twice.
\end{example}

\begin{figure}[h]
\begin{center}
\begin{tikzpicture}[main_node/.style={circle,fill=white!20,draw,minimum size=0.4em,inner sep=1pt}, every loop/.style={}]

\node[main_node, label=left: $v_1$, color=WildStrawberry ] (1) at (0, 0) {};
\node[main_node, label=right: $v_2$ ] (2) at (2,0) {};
\node[main_node, label=right: $v_3$, color=MidnightBlue ] (3) at (2, 2) {};
\node[main_node, label=left: $v_4$] (4) at (0, 2) {};

\path[every node/.style={}]
(1) edge[style=dashed] node {} (2)
(1) edge[color=MidnightBlue, style={<->,> = latex'}, line width = 1.2pt] node {} (3) 
(1) edge[style=dashed] node {} (4) 
(2) edge[style=dashed] node {} (3)
(2) edge[style=dashed] node {} (4)
(3) edge[style=dashed] node {} (4) ;
\end{tikzpicture}
\caption{A closed walk of length 2 on $K_4$.}
\label{fig:Figure11}
\end{center}
\end{figure}
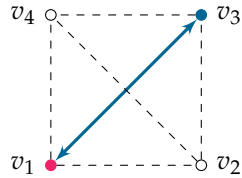

We are now in a position to define an important type of graph. A graph $G$ is called a \emph{tree} if there exists a unique walk (without repeated edges) starting at $v_i$ and ending at $v_j$ for any distinct vertices $v_i$ and $v_j$. The path and star graphs of Figures \ref{fig:Figure4} and \ref{fig:Figure5} are examples of trees. 

%Figure \ref{fig:Figure12} shows more examples.

%\begin{figure}[h]
%\begin{center}
%{\footnotesize

%}
%\caption{Trees of various size.} 
%\label{fig:Figure12} 
%\end{center}
%\end{figure}

%%%%%%%%%%%%%%%%%%%%%%%%%%%%%%%%%%%%%%%%%%%%%%%%%%%%%%%%%%%%%%%%%%%%%%%%%%%
%%%%%%%%%%%%%%%%%%%%%%%%%%%%%%%%%%%%%%%%%%%%%%%%%%%%%%%%%%%%%%%%%%%%%%%%%%%
%%%%%%%%%%%%%%%%%%%%%%%%%%%%%%%%%%%%%%%%%%%%%%%%%%%%%%%%%%%%%%%%%%%%%%%%%%%
\subsection{Counting closed walks} How many closed walks of length 3 are there in $K_4$? Figure \ref{fig:Figure8} shows two closed walks of length 3. A brute force approach shows there are a total 12 closed walks of length 3. However, a brute force approach is no longer so feasible if we want to count the number of closed walks of length 7, for example. Fortunately, the number of walks $W_k(i,j)$ in a graph $G$ of length $k$ that start at $v_i$ and end at $v_j$ is elegantly encoded in the adjacency matrix of $G$. The following is \cite[Proposition 1.3.4]{Cvetkovic}.

\begin{proposition}\label{prop:Walks}
Suppose $G$ is a graph with adjacency matrix $A=A(G)$. The number of walks in $G$ of length $k$ that start at vertex $v_i$ and end at vertex $v_j$ satisfies the identity
\begin{align*}
W_k(i,j)=[A^k]_{ij}.
\end{align*}
\end{proposition}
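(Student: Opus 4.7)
The plan is to prove this by induction on the length $k$ of the walk, using the observation that a walk of length $k+1$ can be decomposed as a walk of length $k$ followed by a single edge.

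For the base case $k=1$, a walk of length $1$ from $v_i$ to $v_j$ is precisely a single edge $e_{ij}$. By the definition of the adjacency matrix, $[A]_{ij}=1$ when such an edge exists and $[A]_{ij}=0$ otherwise. Since a walk of length $1$ is entirely determined by which edge is traversed, the count $W_1(i,j)$ agrees with $[A]_{ij}$.

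For the inductive step, assume $W_k(i,\ell) = [A^k]_{i\ell}$ for all vertices $v_i, v_\ell$, and consider a walk of length $k+1$ from $v_i$ to $v_j$. Any such walk is obtained by first taking a walk of length $k$ from $v_i$ to some intermediate vertex $v_\ell$, and then traversing the edge $e_{\ell j}$ (which requires $[A]_{\ell j}=1$). Partitioning all walks according to the choice of $v_\ell$ gives
\begin{align*}
W_{k+1}(i,j) \;=\; \sum_{\ell=1}^n W_k(i,\ell)\,[A]_{\ell j} \;=\; \sum_{\ell=1}^n [A^k]_{i\ell}\,[A]_{\ell j} \;=\; [A^{k+1}]_{ij},
\end{align*}
where the last equality is just the definition of matrix multiplication. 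This completes the induction.

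I do not anticipate any serious obstacle here; the only subtlety worth a sentence of care is articulating precisely why the decomposition of a length-$(k+1)$ walk into a length-$k$ walk plus a final edge is a bijection, so that the counts really do multiply and sum as claimed. The indicator-variable interpretation of $[A]_{\ell j}$ makes this transparent: edges that do not exist contribute $0$ and are automatically excluded from the sum.
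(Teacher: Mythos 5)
Your proof is correct and follows the same route as the paper: induction on $k$, with the base case given by the definition of the adjacency matrix and the inductive step obtained by decomposing a length-$(k+1)$ walk into a length-$k$ walk followed by a final edge, which matches the matrix-multiplication formula. Your extra remark about why the decomposition gives a bijection is a nice touch but not a departure from the paper's argument.
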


\begin{proof}
We use induction on $k$ to prove that $W_k(i,j)=[A^k]_{ij}$. If $k=1$, then the claim holds by the definition of the adjacency matrix. If $k>1$, then our inductive hypothesis ensures
\begin{align*}
[A^k]_{ij}=\sum_{\ell=1}^n [A^{k-1}]_{i\ell}[A]_{\ell j}=\sum_{\ell=1}^n W_{k-1}(i,\ell) W_1(\ell,j)=W_k(i,j).
\end{align*}Indeed, $W_1(\ell, j)=1$ if and only if there is an edge between vertices $v_{\ell}$ and $v_{j}$.
\end{proof}

Suppose $G$ is a graph of order $n$ with spectrum $\operatorname{spec}(G)=\{\lambda_1, \lambda_2, \ldots, \lambda_n\}$ and adjacency matrix $A$. The number of closed walks of length $k$ in $G$ is denoted $c_k(G)$. Proposition \ref{prop:Walks} ensures
\begin{align*}
c_k(G)=\sum_{j=1}^n W_k(j,j)=\tr(A^k).
\end{align*} Any square matrix is similar to an upper triangular matrix \cite[Theorem 11.2.14]{GarciaHorn}. In particular, the trace of any matrix is given by the sum of its eigenvalues. The eigenvalues of $A^k$ correspond to the $k$th powers of the eigenvalues of $A$. Consequently, $\tr(A^k)=\lambda_1^k+\lambda_2^k+\cdots +\lambda_n^k$. The following is \cite[Theorem 3.1.1]{Cvetkovic}.

\begin{corollary}\label{cor:ClosedWalks}
Suppose $G$ is a graph with spectrum $\operatorname{spec}(G)=\{\lambda_1, \lambda_2, \ldots, \lambda_n\}$. The number of closed walks in $G$ of length $k$ satisfies the identity
\begin{align*}
c_k(G)=\lambda_1^k+\lambda_2^k+\cdots+\lambda_n^k.
\end{align*}
\end{corollary}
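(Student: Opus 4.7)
The plan is to combine Proposition \ref{prop:Walks} with the spectral decomposition of the adjacency matrix. First I would observe that a closed walk of length $k$ is by definition a walk of length $k$ whose starting and ending vertices coincide, so every closed walk is counted exactly once by partitioning according to its common start/end vertex:
\begin{equation*}
C_k(G) = \sum_{j=1}^n W_k(j,j).
\end{equation*}
Applying Proposition \ref{prop:Walks} to each summand then gives $C_k(G) = \sum_{j=1}^n [A^k]_{jj} = \tr(A^k)$, where $A = A(G)$.

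Next I would pass from the trace of $A^k$ to the power sum of eigenvalues. Since $A$ is real symmetric, hence Hermitian, Subsection \ref{sub:Hermitian} ensures that its eigenvalues are real; moreover any such matrix is unitarily similar to a diagonal matrix $D=\operatorname{diag}(\lambda_1,\lambda_2,\dots,\lambda_n)$ via some unitary $U$, say $A = UDU^{\ast}$. Then $A^k = U D^k U^{\ast}$, and using the cyclic invariance of the trace,
\begin{equation*}
\tr(A^k) = \tr(UD^kU^{\ast}) = \tr(D^kU^{\ast}U) = \tr(D^k) = \sum_{j=1}^n \lambda_j^k.
\end{equation*}
Combining the two displays yields $C_k(G) = \lambda_1^k + \lambda_2^k + \cdots + \lambda_n^k$, as required.

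There is essentially no hard step here; the argument is a direct consequence of Proposition \ref{prop:Walks} plus the standard spectral theorem for Hermitian matrices. The only subtlety worth flagging is that the identity $\tr(A^k) = \sum \lambda_j^k$ requires the eigenvalues of $A^k$ to be the $k$th powers of the eigenvalues of $A$ (counted with multiplicity), which is immediate from diagonalisability but would be less transparent if one only appealed to the weaker fact that every square matrix is similar to an upper triangular matrix. Since the paper is written for a broad audience, I would make this step explicit rather than leave it to the reader.
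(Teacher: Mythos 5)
Your proof is correct and follows essentially the same route as the paper: partition the closed walks by their common start/end vertex, apply Proposition \ref{prop:Walks} to obtain $C_k(G)=\sum_{j}W_k(j,j)=\tr(A^k)$, and then identify $\tr(A^k)$ with $\lambda_1^k+\cdots+\lambda_n^k$. The only (cosmetic) difference is in the last step, where you invoke the spectral theorem and cyclic invariance of the trace, while the paper appeals to upper-triangularization of an arbitrary square matrix; both justifications are valid.
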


\begin{example}
Proposition \ref{prop:Spectra} (b) ensures $\operatorname{spec}(K_3) = \{2, -1,-1\}$. Therefore, 
\begin{align*}
c_4(K_3) = 2^4 + (-1)^4+(-1)^4= 18
\end{align*} In particular, there are a total of 18 closed walks of length 4 in $K_3$.
\end{example}

\begin{example}
Proposition \ref{prop:Spectra} (b) ensures $\operatorname{spec}(K_4)=\{3, -1, -1, -1\}$. Therefore,
\begin{align*}
c_7(K_4)=3^7+(-1)^7+(-1)^7+(-1)^7=2184.
\end{align*} In particular, there are a total of 2184 closed walks of length 7 in $K_4$.
\end{example}

\begin{remark}
In general, the number $c_k(G)$ is difficult to compute. In fact, estimates for $c_k(G)$ are often the best one can hope for. Bounds on the number of closed walks in a graph are given, for instance, in \cite{Chen, Nikiforov1}.
\end{remark}

%%%%%%%%%%%%%%%%%%%%%%%%%%%%%%%%%%%%%%%%%%%%%%%%%%%%%%%%%%%%%%%%%%%%%%%%%%%
%%%%%%%%%%%%%%%%%%%%%%%%%%%%%%%%%%%%%%%%%%%%%%%%%%%%%%%%%%%%%%%%%%%%%%%%%%%
%%%%%%%%%%%%%%%%%%%%%%%%%%%%%%%%%%%%%%%%%%%%%%%%%%%%%%%%%%%%%%%%%%%%%%%%%%%
%%%%%%%%%%%%%%%%%%%%%%%%%%%%%%%%%%%%%%%%%%%%%%%%%%%%%%%%%%%%%%%%%%%%%%%%%%%
%%%%%%%%%%%%%%%%%%%%%%%%%%%%%%%%%%%%%%%%%%%%%%%%%%%%%%%%%%%%%%%%%%%%%%%%%%%

\section{Power sum symmetric polynomials}\label{sec:PowerSums}

A polynomial $f(x_1,x_2, \ldots, x_n)$ with real coefficients is \emph{symmetric} if it is invariant under any permutation of $x_1, x_2, \ldots, x_n$. We say that $f(x_1, x_2, \ldots, x_n)$ is \emph{homogeneous of degree $d$} if the relation 
\begin{align*}
f(\alpha x_1, \alpha x_2, \ldots, \alpha x_n)=\alpha^k f(x_1, x_2, \ldots, x_n)
\end{align*} holds for all $\alpha\in \mathbb{R}$. The set of all degree-$d$ symmetric polynomials in $n$ variables is denoted $\Lambda_n^d$ and is a finite-dimensional vector space over $\mathbb{R}$. The reader can readily verify that $h_d(x_1, x_2, \ldots, x_n)$ is indeed an element of the vector space $\Lambda_n^d$.

The vector space $\Lambda_n^d$ has dimension equal to the number of partitions of $d$. A \emph{partition} of $d$ is a tuple $\boldsymbol{\pi}=(\pi_1, \pi_2, \ldots, \pi_{\ell})$ of positive integers such that $\pi_1\geq \pi_2\geq \cdots\geq \pi_{\ell}$ and $\pi_1+\pi_2+\cdots+\pi_{\ell}=d$ \cite[Section 1.7]{Stanley1}. We write $\boldsymbol{\pi}\vdash d$ whenever $\boldsymbol{\pi}$ is a partition of $d$. The number of partitions of $d$ is denoted $P(d)$.

\begin{example}
$P(3)=3$. In particular, the 3 partitions of $d=3$ are given by 
\begin{align*}
(3), \quad (2,1), \quad (1,1,1).
\end{align*}
\end{example}

\begin{example}
$P(4)=5$. In particular, the 5 partitions of $d=4$ are given by
\begin{align*}
(4), \quad (3,1), \quad (2,2), \quad (2,1,1), \quad (1,1,1,1).
\end{align*}
\end{example}

\begin{example}\label{ex:Six}
$P(6)=11$. There are 4 partitions of $d=6$ without 1 appearing:
\begin{align*}
(6), \quad (4,2), \quad (3,3), \quad (2,2,2).
\end{align*}
\end{example}

\begin{remark}
The first several terms of the sequence $\{P(d)\,: d\geq 1\}$ are given by
\begin{align*}
1, 2, 3, 5, 7, 11, 15, 22, 30, 42, 56, 77, 101, 135, 176, 231, 297, 385, \ldots ,
\end{align*}which is OEIS sequence A000041.\footnote{\url{https://oeis.org/A000041}} Interestingly, there is no closed form expression for $P(d)$. This is not to say we understand nothing about the sequence. For instance, Euler's pentagonal number theorem \cite[Theorem 14.3]{Apostol} can be used to prove the recursion $P(d)=\sum_k (-1)^{k-1}p(d-g_k)$. Here, the summation is taken over all nonzero integers and $g_k$ denotes the $k$th pentagonal number \cite[Page 5]{Apostol}.
\end{remark}

%\begin{remark}
%There is no closed-form expression for $P(d)$. The first few terms of the sequence $P(d)$ are as follows:
%\end{remark}

The \emph{power sum symmetric} polynomial of degree $d$ in the variables $x_1, x_2, \ldots, x_n$ is the homogeneous polynomial defined by 
\begin{align*}
p_d(x_1, x_2, \ldots, x_n)=x_1^d +x_2^d+\cdots +x_n^d.
\end{align*} We denote $p_d=p_d(x_1, x_2, \ldots, x_n)$ when $n$ is understood. The \emph{power sum symmetric function} for $\boldsymbol{\pi}=(\pi_1, \pi_2, \ldots, \pi_{\ell})\vdash d$ is the polynomial $p_{\boldsymbol{\pi}}\in \Lambda_n^d$ defined by
\begin{align*}
p_{\boldsymbol{\lambda}}=p_{\pi_1}p_{\pi_2}\cdots p_{\pi_{\ell}}.
\end{align*}

\begin{proposition}\label{prop:PowerSumBasis}
The set $\{p_{\boldsymbol{\pi}}\,:\,\boldsymbol{\pi}\vdash d\}$ is a basis for $\Lambda_n^d$. In particular, the dimension of $\Lambda_n^d$ is equal to the number of partitions of $d$.
\end{proposition}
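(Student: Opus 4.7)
The plan is to exhibit a concrete basis for $\Lambda_n^d$---the \emph{monomial symmetric polynomials}---and then to pass from this basis to $\{p_{\boldsymbol{\pi}} : \boldsymbol{\pi} \vdash d\}$ via a triangular change of basis. For each partition $\boldsymbol{\mu} = (\mu_1, \mu_2, \ldots, \mu_\ell) \vdash d$ with $\ell \leq n$, define the monomial symmetric polynomial $m_{\boldsymbol{\mu}}$ to be the sum of all distinct monomials in the $S_n$-orbit of $x_1^{\mu_1} x_2^{\mu_2} \cdots x_\ell^{\mu_\ell}$. Since every element of $\Lambda_n^d$ decomposes uniquely into a sum of $S_n$-orbits of monomials, and distinct $m_{\boldsymbol{\mu}}$'s involve disjoint sets of monomials, the family $\{m_{\boldsymbol{\mu}} : \boldsymbol{\mu} \vdash d\}$ is a basis of $\Lambda_n^d$ of cardinality $P(d)$.

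Next, I would expand each $p_{\boldsymbol{\pi}} = p_{\pi_1} p_{\pi_2} \cdots p_{\pi_\ell}$ in this basis. Multiplying out, the product is a sum over tuples $(j_1, j_2, \ldots, j_\ell) \in \{1, 2, \ldots, n\}^\ell$ of the monomials $x_{j_1}^{\pi_1} x_{j_2}^{\pi_2} \cdots x_{j_\ell}^{\pi_\ell}$. Grouping the indices according to the set partition $\{S_1, \ldots, S_r\}$ of $\{1, 2, \ldots, \ell\}$ they induce, the resulting monomial has exponent tuple $\bigl( \sum_{j \in S_1} \pi_j, \ldots, \sum_{j \in S_r} \pi_j \bigr)$ on $r$ distinct variables. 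Its sorted exponent partition $\boldsymbol{\mu}$ is therefore a coarsening of $\boldsymbol{\pi}$, which forces $\sum_{j=1}^k \mu_j \geq \sum_{j=1}^k \pi_j$ for every $k$ (the dominance order). The extremal case $\boldsymbol{\mu} = \boldsymbol{\pi}$ arises only from the finest set partition into singletons, and contributes a strictly positive integer coefficient to $m_{\boldsymbol{\pi}}$.

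Listing the partitions of $d$ in a linear extension of the dominance order, the transition matrix from $\{p_{\boldsymbol{\pi}}\}$ to $\{m_{\boldsymbol{\mu}}\}$ is therefore triangular with strictly positive diagonal entries, hence invertible. Since $\{m_{\boldsymbol{\mu}}\}$ is a basis, it follows that $\{p_{\boldsymbol{\pi}} : \boldsymbol{\pi} \vdash d\}$ is also a basis of $\Lambda_n^d$, and the dimension count $\dim \Lambda_n^d = P(d)$ drops out at once.

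The main technical obstacle is the combinatorial bookkeeping behind triangularity: verifying that no partition strictly below $\boldsymbol{\pi}$ in dominance can appear in the expansion of $p_{\boldsymbol{\pi}}$, and pinning down that the diagonal coefficient is strictly positive. Once the bijection between monomials in the expansion of $p_{\boldsymbol{\pi}}$ and set partitions of $\{1, 2, \ldots, \ell\}$ is made precise, both facts follow from the elementary observation that the partial sums of a coarsening dominate those of the original partition.
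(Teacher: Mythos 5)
Your proposal is correct and follows essentially the same route the paper sketches: pass to the monomial basis $\{m_{\boldsymbol{\mu}}\}$ and show the transition matrix to $\{p_{\boldsymbol{\pi}}\}$ is invertible. The paper merely cites Stanley for the invertibility of $R$; you supply the actual reason, namely unitriangularity (up to positive diagonal scalars $\prod_i m_i!$) with respect to any linear extension of dominance, via the observation that merging parts of $\boldsymbol{\pi}$ produces a partition dominating $\boldsymbol{\pi}$, with equality only for the singleton set partition. That argument is sound. One caveat, which your proof inherits from the statement itself: the claim that $\{m_{\boldsymbol{\mu}} : \boldsymbol{\mu}\vdash d\}$ is a basis of cardinality $P(d)$ requires $n\geq d$, since $m_{\boldsymbol{\mu}}=0$ whenever $\boldsymbol{\mu}$ has more than $n$ parts (and correspondingly the diagonal coefficient of your transition matrix vanishes, as there are not enough distinct indices $j_1,\dots,j_\ell$); for $n<d$ the dimension of $\Lambda_n^d$ is the number of partitions of $d$ into at most $n$ parts, and the $p_{\boldsymbol{\pi}}$ become linearly dependent. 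It would be worth stating the hypothesis $n\geq d$ explicitly, noting that the power sum expansion of $h_d$ used later remains valid for all $n$ regardless.
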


Proposition \ref{prop:PowerSumBasis} is \cite[Corollary 7.7.2]{Stanley2}. We do not give a formal proof of this fact. However, we can briefly describe the idea of the proof. Essentially, one first establishes that a certain set $\{m_{\boldsymbol{\pi}}\,:\,\boldsymbol{\pi}\vdash d\}$ is a basis for $\Lambda_n^d$. The power sum symmetric functions are elements of $\Lambda_n^d$ and therefore admit and expansion of the form $p_{\boldsymbol{\pi}}=\sum_{\boldsymbol{\mu}\vdash d} R_{\boldsymbol{\pi} \boldsymbol{\mu}} m_{\boldsymbol{\mu}}$. The matrix $R $ is an invertible $P(d)\times P(d)$ matrix and therefore must map a basis to a basis. The mystery basis $\{m_{\boldsymbol{\pi}}\,:\,\boldsymbol{\pi}\vdash d\}$ is called the \emph{monomial basis} for $\Lambda_n^d$. See \cite[Section 7.3]{Stanley2}.

\begin{example}
The vector space $\Lambda_3^3$ has dimension $P(3)=3$ and basis given by $\{p_{(3)}, p_{(2,1)}, p_{(1,1,1)}\}$. In particular, any element of $\Lambda_3^3$ can be written as a unique linear combination of the polynomials 
\begin{align*}
x^3+y^3+z^3, \quad (x^2+y^2+z^2)(x+y+z), \quad (x+y+z)^3.
\end{align*}
\end{example}

\begin{example}
The vector space $\Lambda_3^4$ has dimension $P(4)=5$ and basis given by $\{p_{(4)}, p_{(3,1)}, p_{(2,2)}, p_{(2,1,1)}, p_{(1,1,1,1)}\}$. In particular, any element of $\Lambda_{3}^4$ can be written as a unique linear combination of the polynomials 
{\footnotesize
\begin{align*}
x^4+y^4+z^4, \, (x^3+y^3+z^3)(x+y+z), \, (x^2+y^2+z^2)^2, \,(x^2+y^2+z^2)(x+y+z)^2, \, (x+y+z)^4.
\end{align*}
}
\end{example}

\begin{remark}
The Schur polynomials are homogeneous symmetric polynomials indexed by partitions and with important connections to representation theory, algebraic combinatorics and algebraic geometry. Cauchy \cite{Cauchy} first defined the Schur polynomials as ratios of alternants. Jacobi \cite{Jacobi} later established a determinantal formula for the Schur polynomials which is now known as the Jacobi-Trudi identity. Finally, Schur \cite{Schur} established the intimate connection between Schur polynomials and the representation theory of the symmetric and general linear groups. Interestingly, the CHS polynomials are actually examples of Schur polynomials. See \cite[Sections 7.10 - 7.19]{Stanley2} for an introduction to Schur polynomials and their connection with combinatorics and representation theory.
\end{remark}

%%%%%%%%%%%%%%%%%%%%%%%%%%%%%%%%%%%%%%%%%%%%%%%%%%%%%%%%%%%%%%%%%%%%%%%%%%%
%%%%%%%%%%%%%%%%%%%%%%%%%%%%%%%%%%%%%%%%%%%%%%%%%%%%%%%%%%%%%%%%%%%%%%%%%%%
%%%%%%%%%%%%%%%%%%%%%%%%%%%%%%%%%%%%%%%%%%%%%%%%%%%%%%%%%%%%%%%%%%%%%%%%%%%
\subsection{The power sum expansion for CHS polynomials} Suppose $\boldsymbol{\pi}$ is a partition. Define the integer $z_{\boldsymbol{\pi}}=\prod_{i\geq 1}i^{m_i}m_i!$, in which $m_i$ denotes the multiplicity of $i$ in $\boldsymbol{\pi}$. The integer $z_{\boldsymbol{\pi}}$ is actually equal to the size of the centralizer of a permutation in the symmetric group of conjugacy class $\boldsymbol{\pi}$ \cite[Proposition 7.7.3]{Stanley2}.

\begin{example}\label{ex:Conjugacy2}
Consider the partition $(1,1)\vdash 2$. Observe $m_1=2$ since 1 appears twice in $\boldsymbol{\pi}$. Moreover, $m_i=1$ whenever $i>1$. Therefore, $z_{(1,1)}=1^{m_1}m_1!=1^22!=2.$ A similar calculation ensures $z_{(2)}= 2^{m_2}m_2!=2^1 1!=2$. Technically, the product defining $z_{\boldsymbol{\pi}}$ is infinite. However, the product contains only finitely many terms not equal to 1. 
\end{example}

\begin{example}\label{ex:Conjugacy46}
Values of $z_{\boldsymbol{\pi}}$ for the partitions of $4$ and $6$ (without 1 appearing) are shown in the following table:\\

\begin{center}
\begin{tabular}{ |c|c|} 
\hline
$\boldsymbol{\pi}$ & $z_{\boldsymbol{\pi}}$\\ 
\hline
(4) & 4\\
(3, 1) & 3\\
(2, 2) & 8\\
(2, 1, 1)& 4\\
(1, 1, 1, 1)& 24\\
\hline
(6) & 6\\
(4, 2)& 8 \\
(3, 3)& 18\\
(2, 2, 2) & 48\\
\hline
\end{tabular} 
\end{center} 
\end{example}

\vspace{1pc}
Again, we often denote $h_d=h_d(x_1, x_2, \ldots, x_n)$ when $n$ is understood. The CHS polynomial $h_d$ is an element of the vector space $\Lambda_n^d$. Therefore, $h_d$ can be expanded in the basis of power sum symmetric functions. The following result is \cite[Proposition 7.7.6]{Stanley2}. 

\begin{proposition}\label{prop:PowerSum}
The CHS polynomial $h_d$ admits the power sum expansion
\begin{align*}
h_d=\sum_{\boldsymbol{\pi}\vdash d} z_{\boldsymbol{\pi}}^{-1}p_{\boldsymbol{\pi}}.
\end{align*}
\end{proposition}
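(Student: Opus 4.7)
The plan is to extract the identity directly from the generating function \eqref{eq:MGF} via the standard ``exp-log'' trick that converts a product over the variables into an exponentiated sum of power sums. Writing $H(t) = \sum_{d \geq 0} h_d t^d = \prod_{i=1}^n (1-x_i t)^{-1}$, I would first take the formal logarithm of both sides and interchange the order of summation, obtaining
$$\log H(t) \;=\; -\sum_{i=1}^n \log(1 - x_i t) \;=\; \sum_{i=1}^n \sum_{k \geq 1} \frac{x_i^k}{k}\, t^k \;=\; \sum_{k \geq 1} \frac{p_k}{k}\, t^k,$$
since $p_k = x_1^k + x_2^k + \cdots + x_n^k$.

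Next, I would exponentiate and factor the exponential as a product of separate exponentials, one for each $k \geq 1$, then expand each factor as an ordinary power series:
$$H(t) \;=\; \prod_{k \geq 1} \exp\!\Big(\tfrac{p_k}{k}\, t^k\Big) \;=\; \prod_{k \geq 1}\, \sum_{m_k \geq 0} \frac{p_k^{m_k}}{k^{m_k}\, m_k!}\, t^{k m_k}.$$
Distributing this product produces a sum indexed by tuples $(m_1, m_2, \dots)$ of nonnegative integers with only finitely many nonzero entries, where the tuple $(m_k)$ contributes
$$\Big( \prod_{k \geq 1} \frac{p_k^{m_k}}{k^{m_k}\, m_k!} \Big)\, t^{\sum_k k\, m_k}.$$
The standard bijection between such tuples and integer partitions sends $(m_k)$ to the partition $\boldsymbol{\pi}$ whose multiplicity of each part $k$ equals $m_k$; under this bijection $\sum_k k\, m_k = |\boldsymbol{\pi}|$, $\prod_k p_k^{m_k} = p_{\boldsymbol{\pi}}$, and $\prod_k k^{m_k} m_k! = z_{\boldsymbol{\pi}}$ by the definition preceding Example \ref{ex:Conjugacy2}. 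Comparing the coefficient of $t^d$ on both sides of the resulting identity $H(t) = \sum_{\boldsymbol{\pi}} z_{\boldsymbol{\pi}}^{-1}\, p_{\boldsymbol{\pi}}\, t^{|\boldsymbol{\pi}|}$ then yields $h_d = \sum_{\boldsymbol{\pi} \vdash d} z_{\boldsymbol{\pi}}^{-1}\, p_{\boldsymbol{\pi}}$, as claimed.

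The only non-computational issue is to verify that the manipulations are valid in the ring $\R[x_1, \dots, x_n][[t]]$ of formal power series. Since $\exp(p_k t^k / k) = 1 + (\text{terms of }t\text{-degree} \geq k)$, the coefficient of $t^d$ in the infinite product involves only the first $d$ factors and is a finite sum; likewise, $\log(1-x_i t)$ is a formal series with no constant term, so $\log H(t)$ is well defined. I expect the main obstacle to be purely clerical: carefully matching the multinomial-type factors $\prod_k k^{m_k} m_k!$ coming from the exponential expansion with the centralizer sizes $z_{\boldsymbol{\pi}}$ recorded in Example \ref{ex:Conjugacy46}, and checking that the bijection between sequences $(m_k)$ and partitions $\boldsymbol{\pi}$ is implemented correctly so that the $m_k!$ precisely accounts for the indistinguishability of equal parts.
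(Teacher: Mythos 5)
Your proposal is correct and follows essentially the same route as the paper's proof: take the logarithm of the generating function \eqref{eq:MGF} to obtain $\sum_{k\geq 1}\frac{p_k}{k}t^k$, exponentiate factor by factor, expand each exponential, and match the resulting multi-index sum with partitions via $z_{\boldsymbol{\pi}}=\prod_k k^{m_k}m_k!$. The additional remarks on validity in the ring of formal power series are fine but not needed beyond what the paper already does implicitly.
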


\begin{proof}
Our proof follows \cite[Page 25]{Macdonald}. Recall $\log(1-t)=-\sum_{k=1}^{\infty} \frac{t^k}{k}$ and $e^t=\sum_{k=0}^{\infty} \frac{t^k}{k!}$. Take the exponential of the logarithm of both sides of \eqref{eq:MGF} to conclude
\begin{align*}
\sum_{d=0}^{\infty}h_d(x_1, x_2, \ldots, x_n)t^d&=\exp\Big(-\sum_{i=1}^n\log(1-x_it)\Big)\\
&=\exp\Big(\sum_{i=1}^n\sum_{k=1}^{\infty} \frac{x_i^kt^k}{k}\Big)\\
&=\exp\Big(\sum_{k=1}^{\infty} \frac{(x_1^k+x_2^k+\cdots +x_n^k)t^k}{k}\Big)\\
&=\exp\Big(\sum_{k=1}^{\infty} \frac{p_k}{k} t^k\Big)\\
&=\prod_{k=1}^{\infty} \exp\Big( \frac{p_k}{k} t^k \Big) \\
&=\prod_{k=1}^{\infty} \sum_{m_k=0}^{\infty}\frac{(p_k t^k)^{m_k}}{k^{m_k}m_k!} \\
&=\sum_{d=0}^{\infty} \Big( \sum_{\boldsymbol{\pi}\vdash d} z_{\boldsymbol{\pi}}^{-1}p_{\boldsymbol{\pi}} \Big)t^d.
\end{align*} The result follows by comparing coefficients of $t^d$. 
\end{proof}

\begin{example}\label{ex:CHS2}
The partitions of $2$ are $(2)$ and $(1,1)$. Example \ref{ex:Conjugacy2} and Proposition \ref{prop:PowerSum} ensure that 
\begin{align*}
h_2=z_{(2)}^{-1}p_{(2)}+z_{(1,1)}^{-1}p_{(1,1)}=\frac{p_2}{2}+\frac{p_1^2}{2}.
\end{align*}

\end{example}

\begin{example}\label{ex:CHS4}
The partitions of $4$ are $(4)$, $(3,1)$, $(2,2)$, $(2,1,1)$ and $(1,1,1,1)$. Example \ref{ex:Conjugacy46} and Proposition \ref{prop:PowerSum} ensure that 
\begin{align*}
h_4=\sum_{\boldsymbol{\pi}\vdash 4} z_{\boldsymbol{\pi}}^{-1}p_{\boldsymbol{\pi}}=\frac{p_4}{4}+\frac{p_3p_1}{3}+\frac{p_2^2}{8}+\frac{p_2p_1^2}{4}+\frac{p_1^4}{24}.
\end{align*}
\end{example}

\begin{proposition}\label{prop:246Norms}
Suppose $G$ is a graph with $m$ edges and adjacency matrix $A$. The following identities hold:\\

\noindent (a) $\norm{G}_2^2=\frac{1}{2} \tr(A^2)=m$,\\

\noindent (b) $\norm{G}_4^4=\frac{1}{4}\tr(A^4)+\frac{1}{8}\big[ \tr (A^2)\big]^2=\frac{1}{4}\tr(A^4)+\frac{1}{2}m^2,$\\

\noindent (c) $\norm{G}_6^6=\frac{1}{6}\tr(A^6)+\frac{1}{4}m\tr(A^4)+\frac{1}{18}\big[\tr(A^3)\big]^2+\frac{1}{6}m^3.$
\end{proposition}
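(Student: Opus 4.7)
The plan is to apply Proposition \ref{prop:PowerSum} to each $h_d$ and evaluate the resulting power sum expansion at the vector $(\lambda_1, \ldots, \lambda_n)$ of eigenvalues of $A$. Two observations will drive every simplification. First, since $G$ is simple it has no loops, so every diagonal entry of $A$ vanishes and therefore $p_1(\lambda_1, \ldots, \lambda_n) = \tr(A) = 0$, which kills every term whose indexing partition contains a part equal to $1$. Second, Corollary \ref{cor:ClosedWalks} identifies $p_k(\lambda_1, \ldots, \lambda_n)$ with $\tr(A^k)$, and the special case $\tr(A^2) = \sum_{i,j} A_{ij}^2 = 2m$ follows at once from the fact that each edge contributes exactly twice to that sum.

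For part (a), Example \ref{ex:CHS2} already records $h_2 = \tfrac{1}{2} p_2 + \tfrac{1}{2} p_1^2$. Dropping the $p_1^2$ term and substituting $\tr(A^2) = 2m$ yields $\norm{G}_2^2 = \tfrac{1}{2} \tr(A^2) = m$. For part (b), Example \ref{ex:CHS4} provides the full expansion of $h_4$; the three terms carrying a factor of $p_1$ vanish, leaving $\norm{G}_4^4 = \tfrac{1}{4}\tr(A^4) + \tfrac{1}{8}(\tr(A^2))^2$, and substituting $\tr(A^2) = 2m$ in the second term produces the claimed $\tfrac{1}{2} m^2$.

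For part (c) I would first derive the analogous expansion of $h_6$. Example \ref{ex:Six} lists the partitions of $6$ with no part equal to $1$, namely $(6), (4,2), (3,3), (2,2,2)$, and Example \ref{ex:Conjugacy46} records the corresponding values $z_{\boldsymbol{\pi}} = 6, 8, 18, 48$. Discarding the seven remaining partitions of $6$, each of which contains a part equal to $1$ and therefore contributes nothing after evaluation, Proposition \ref{prop:PowerSum} reduces to
\begin{align*}
h_6 = \frac{p_6}{6} + \frac{p_4 p_2}{8} + \frac{p_3^2}{18} + \frac{p_2^3}{48}.
\end{align*}
Evaluating at the spectrum of $A$ and substituting $\tr(A^2) = 2m$ in the $p_4 p_2$ and $p_2^3$ terms immediately yields $\norm{G}_6^6 = \tfrac{1}{6}\tr(A^6) + \tfrac{1}{4} m \tr(A^4) + \tfrac{1}{18}(\tr(A^3))^2 + \tfrac{1}{6} m^3$.

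There is no serious obstacle here; the argument is a mechanical instance of the principle that each CHS norm of $G$ is a polynomial expression in the traces $\tr(A^k)$, whose exact shape is dictated by the $z_{\boldsymbol{\pi}}^{-1}$-weighted power sum expansion of $h_d$ together with the vanishing of $p_1$. The only step needing any care is the correct enumeration and tabulation of the partitions of $6$ that contribute nontrivially, which is bookkeeping rather than mathematics.
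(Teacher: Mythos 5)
Your proposal is correct and follows essentially the same route as the paper: expand $h_d$ in the power sum basis via Proposition \ref{prop:PowerSum}, discard all partitions containing a part equal to $1$ since $p_1(\lambda_1,\dots,\lambda_n)=\tr(A)=0$, identify $p_k$ with $\tr(A^k)$, and substitute $\tr(A^2)=2m$. The only cosmetic difference is that you compute $\tr(A^2)=\sum_{i,j}A_{ij}^2=2m$ directly rather than citing Corollary \ref{cor:ClosedWalks}, which is the same fact.
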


\begin{proof}
Suppose $G$ has eigenvalues $\lambda_1, \lambda_2, \ldots, \lambda_n$. The trace of any matrix is the sum of its eigenvalues. The adjacency matrix of any graph has an all zero diagonal. Therefore, $p_1(\lambda_1, \lambda_2, \ldots, \lambda_n)=0$. Example \ref{ex:CHS2} implies
\begin{align*}
\norm{G}_2^2=h_2(\lambda_1, \lambda_2, \ldots, \lambda_n)=\frac{1}{2}\Big(\lambda_1^2+\lambda_2^2+\cdots +\lambda_n^2\Big).
\end{align*} However, Corollary \ref{cor:ClosedWalks} ensures $\lambda_1^2+\lambda_2^2+\cdots +\lambda_n^2=c_2(G)=2m$, which establishes part (a). However, we remark that $p_k(\lambda_1, \lambda_2, \ldots, \lambda_n)=\tr(A^k)$. Part (b) follows by the same argument applied to the expression for $h_4$ of Example \ref{ex:CHS4}. Part (c) follows by expanding $h_6$ in the power sum basis using the coefficients $z_{\boldsymbol{\pi}}$ given in Example \ref{ex:Conjugacy46}. Again, we remark that only the partitions without a 1 appearing are needed in the power sum expansion for $h_6$ because $p_1(\lambda_1, \lambda_2, \ldots, \lambda_n)=0$.
\end{proof}

The path of order $n$ has $m=n-1$ edges. Proposition \ref{prop:246Norms} (a) therefore implies that $\norm{P_n}_2=\sqrt{n-1}$. However, computing $\norm{P_n}_d$ for $d\geq 4$ seems unattainable at the moment. We leave this is an interesting open problem.\\

\noindent\textbf{Problem 1}. Compute $\norm{P_n}_d$ for $d\geq 4$.

%%%%%%%%%%%%%%%%%%%%%%%%%%%%%%%%%%%%%%%%%%%%%%%%%%%%%%%%%%%%%%%%%%%%%%%%%%%
%%%%%%%%%%%%%%%%%%%%%%%%%%%%%%%%%%%%%%%%%%%%%%%%%%%%%%%%%%%%%%%%%%%%%%%%%%%
%%%%%%%%%%%%%%%%%%%%%%%%%%%%%%%%%%%%%%%%%%%%%%%%%%%%%%%%%%%%%%%%%%%%%%%%%%%
%%%%%%%%%%%%%%%%%%%%%%%%%%%%%%%%%%%%%%%%%%%%%%%%%%%%%%%%%%%%%%%%%%%%%%%%%%%
%%%%%%%%%%%%%%%%%%%%%%%%%%%%%%%%%%%%%%%%%%%%%%%%%%%%%%%%%%%%%%%%%%%%%%%%%%%
\section{Proof of Theorem \ref{thm:MainTheorem1}}\label{sec:Proof1}

Suppose $G$ and $H$ are noncospectral singularly cospectral graphs of order $n$. Let $\operatorname{spec}(G)=\{\lambda_1, \lambda_2, \ldots, \lambda_n\}$ and $\operatorname{spec}(H)=\{\mu_1, \mu_2, \ldots, \mu_n\}$ denote the spectra of $G$ and $H$. The fact that $G$ and $H$ are simple graphs implies that
\begin{align}
p_1(\boldsymbol{\lambda})=p_1(\boldsymbol{\mu})=0.\label{eq:Proof1A}
\end{align} Moreover, the fact that $G$ and $H$ have the same singular values implies that
\begin{align}
p_k(\boldsymbol{\lambda})=p_k(\boldsymbol{\mu})\label{eq:Proof1B}
\end{align} when $k$ is even. Graphs $G$ and $H$ are cospectral if and only if $p_k(\boldsymbol{\lambda})=p_k(\boldsymbol{\mu})$ for all $k$. This fact follows from \cite[Lemma 2.1]{Godsil}. Consequently, there exists a smallest odd integer $j\neq 1$ for which $p_j(\boldsymbol{\lambda})\neq p_j(\boldsymbol{\mu})$. Now consider the set $\{\boldsymbol{\pi}\,:\, \boldsymbol{\pi}\vdash d\}$ of partitions of $d=j+3$. The observations in \eqref{eq:Proof1A} and \eqref{eq:Proof1B} imply $p_{\boldsymbol{\pi}}(\boldsymbol{\lambda})=p_{\boldsymbol{\pi}}(\boldsymbol{\mu})$ if and only if $\boldsymbol{\pi}\neq (j, 3)$. Proposition \ref{prop:PowerSum} now ensures that $\norm{G}_d^d\neq \norm{H}_d^d$ if and only if $p_{(j,3)}(\boldsymbol{\lambda})\neq p_{(j,3)}(\boldsymbol{\mu})$. However, $p_{(j,3)}=p_jp_3$ and $j$ was chosen to satisfy $p_j(\boldsymbol{\lambda})\neq p_j(\boldsymbol{\mu})$. Theorem \ref{thm:MainTheorem1} follows.

%%%%%%%%%%%%%%%%%%%%%%%%%%%%%%%%%%%%%%%%%%%%%%%%%%%%%%%%%%%%%%%%%%%%%%%%%%%
%%%%%%%%%%%%%%%%%%%%%%%%%%%%%%%%%%%%%%%%%%%%%%%%%%%%%%%%%%%%%%%%%%%%%%%%%%%
%%%%%%%%%%%%%%%%%%%%%%%%%%%%%%%%%%%%%%%%%%%%%%%%%%%%%%%%%%%%%%%%%%%%%%%%%%%
%%%%%%%%%%%%%%%%%%%%%%%%%%%%%%%%%%%%%%%%%%%%%%%%%%%%%%%%%%%%%%%%%%%%%%%%%%%
%%%%%%%%%%%%%%%%%%%%%%%%%%%%%%%%%%%%%%%%%%%%%%%%%%%%%%%%%%%%%%%%%%%%%%%%%%%

\section{Proof of Theorem \ref{thm:MainTheorem2}}\label{sec:Proof2}

Suppose $G$ is a graph. Recall that $c_k(G)$ denotes the number of closed walks in $G$ of length $k$. We denote $c_k=c_k(G)$ when $G$ is understood. The following result immediately follows from Corollary \ref{cor:ClosedWalks} and Proposition \ref{prop:PowerSum}.

\begin{proposition}\label{prop:NormClosed}
Suppose $d\geq 2$ is even. The CHS $d$-norm of a graph $G$ satisfies
\begin{align}
\norm{G}_d^d=\sum_{\boldsymbol{\pi}\vdash d} z_{\boldsymbol{\pi}}^{-1}c_{\boldsymbol{\pi}}, \label{eq:Key}
\end{align}in which $c_{\boldsymbol{\pi}}=c_{\pi_1}c_{\pi_2}\cdots c_{\pi_{\ell}}$ for a partition $\boldsymbol{\pi}=(\pi_1, \pi_2, \ldots, \pi_{\ell})$.
\end{proposition}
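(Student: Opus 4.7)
The plan is to unwind the definition of $\norm{G}_d^d$ and chain together the two cited results; there is essentially no hidden obstacle here, only a substitution.

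First I would write $\norm{G}_d^d = h_d(\lambda_1,\lambda_2,\ldots,\lambda_n)$ directly from the definition of the CHS norm, where $\boldsymbol{\lambda}=(\lambda_1,\ldots,\lambda_n)$ is the spectrum of $G$. Then I would apply Proposition \ref{prop:PowerSum} to expand this evaluation in the power sum basis, obtaining
\[
\norm{G}_d^d \;=\; \sum_{\boldsymbol{\pi}\vdash d} z_{\boldsymbol{\pi}}^{-1}\, p_{\boldsymbol{\pi}}(\boldsymbol{\lambda}).
\]

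Next I would invoke the definition $p_{\boldsymbol{\pi}} = p_{\pi_1}p_{\pi_2}\cdots p_{\pi_\ell}$ of the power sum symmetric functions indexed by a partition $\boldsymbol{\pi}=(\pi_1,\pi_2,\ldots,\pi_\ell)$, so that
\[
p_{\boldsymbol{\pi}}(\boldsymbol{\lambda}) \;=\; p_{\pi_1}(\boldsymbol{\lambda})\,p_{\pi_2}(\boldsymbol{\lambda})\cdots p_{\pi_\ell}(\boldsymbol{\lambda}).
\]
Finally, Corollary \ref{cor:ClosedWalks} identifies each factor $p_{\pi_j}(\boldsymbol{\lambda}) = \lambda_1^{\pi_j}+\cdots +\lambda_n^{\pi_j}$ with the closed walk count $C_{\pi_j}(G)$. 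Taking the product gives $p_{\boldsymbol{\pi}}(\boldsymbol{\lambda}) = C_{\pi_1}C_{\pi_2}\cdots C_{\pi_\ell} = C_{\boldsymbol{\pi}}$, and substituting back into the power sum expansion yields the desired formula \eqref{eq:Key}.

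The only step requiring care is bookkeeping: making sure the multiplicativity convention $C_{\boldsymbol{\pi}} = \prod_j C_{\pi_j}$ matches the multiplicativity of $p_{\boldsymbol{\pi}}$ as a product of power sums. Since this is built into both definitions, no estimates, inductions, or auxiliary constructions are needed; the proof should fit comfortably in three or four lines.
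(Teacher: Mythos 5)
Your argument is correct and is exactly the chain the paper has in mind: it states that Proposition \ref{prop:NormClosed} ``immediately follows'' from Corollary \ref{cor:ClosedWalks} and Proposition \ref{prop:PowerSum}, and your write-up simply makes that substitution explicit. No issues.
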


The coefficients $z_{\boldsymbol{\pi}}^{-1}$ appearing in \eqref{eq:Key} are positive. Proposition \ref{prop:NormClosed} therefore turns the problem of finding extremal graphs for CHS norms into finding extremal graphs for $c_k$. In particular, what graphs minimize and maximize $c_k$? In general, this is not an easy question. Csikv\'ari, however, solved this problem for simple connected graphs and trees \cite{Csikvari}. In particular, \cite[Theorem 4.6]{Csikvari} establishes that $c_k(G)$ is minimized (for each $k$) over all simple connected graphs of order $n$ by the path $P_n$. Consequently, the lower bounds in Theorem \ref{thm:MainTheorem2} (a) and (b) hold. The upper bound in Theorem \ref{thm:MainTheorem2} (a) is immediate from the following proposition.

\begin{proposition}
If $G$ is a simple connected graph of order $n$, then $c_k(G)$ is maximized by the complete graph $K_n$ for each $k$.
\end{proposition}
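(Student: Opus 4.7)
The plan is to exploit the fact that every simple graph on $n$ vertices is a spanning subgraph of $K_n$, so every closed walk in $G$ is automatically a closed walk in $K_n$. This should translate directly into $C_k(G) \leq C_k(K_n)$ for every $k$, which is exactly the desired inequality.

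First I would make this walk-containment argument rigorous at the level of adjacency matrices. Since $G$ is a simple graph of order $n$, its adjacency matrix $A = A(G)$ has entries in $\{0,1\}$ with zeros on the diagonal. The adjacency matrix $A(K_n) = J - I$ of the complete graph has a $1$ in every off-diagonal position. Consequently $0 \leq [A(G)]_{ij} \leq [A(K_n)]_{ij}$ for all $i,j$; that is, $A(G) \leq A(K_n)$ entrywise.

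Next I would propagate this entrywise domination to powers. The point is that if $B, C$ are $n \times n$ matrices with nonnegative entries satisfying $B \leq C$ entrywise, then $B^k \leq C^k$ entrywise for every $k \geq 1$, as follows by induction on $k$: each entry of $B^{k+1} = B^k \cdot B$ is a sum of products of nonnegative numbers, each of which is dominated termwise by the corresponding sum for $C^{k+1}$. Applying this to $B = A(G)$ and $C = A(K_n)$ yields $[A(G)^k]_{ii} \leq [A(K_n)^k]_{ii}$ for every vertex $i$.

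Finally I would invoke Proposition \ref{prop:Walks} and the trace identity $C_k(G) = \operatorname{tr}(A(G)^k) = \sum_{i=1}^n [A(G)^k]_{ii}$ used in the derivation of Corollary \ref{cor:ClosedWalks}. Summing the diagonal inequalities produces
\begin{equation*}
C_k(G) \;=\; \sum_{i=1}^n [A(G)^k]_{ii} \;\leq\; \sum_{i=1}^n [A(K_n)^k]_{ii} \;=\; C_k(K_n),
\end{equation*}
which is the claim. There is essentially no obstacle here; the only thing to be careful about is checking that the entrywise inequality really does survive matrix multiplication, and this is immediate from the nonnegativity of the entries of adjacency matrices of simple graphs. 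Combined with the lower bound from \cite[Theorem 4.6]{Csikvari} already invoked in the excerpt, this completes the upper bound in Theorem \ref{thm:MainTheorem2}(a) via the nonnegative-coefficient expansion in Proposition \ref{prop:NormClosed}.
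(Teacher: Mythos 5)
Your proposal is correct and is essentially the paper's own argument: the paper observes that $G$ embeds in $K_n$ as a subgraph, so every closed walk in $G$ is a closed walk in $K_n$, giving $C_k(G)\leq C_k(K_n)$. Your matrix-level formalization via entrywise domination of nonnegative matrix powers is just a more explicit rendering of the same walk-containment idea.
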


\begin{proof}
The graph $G$ can be embedded inside the complete graph $K_n$ as a subgraph. Therefore, any closed walk in $G$ can be viewed as a closed walk in $K_n$. Consequently, $c_k(G)\leq c_k(K_n)$ for each $k$. 
\end{proof}

\cite[Theorem 4.6]{Csikvari} also establishes that $c_k(T)$ is maximized (for each $k$) over all trees of order $n$ by the star $S_n$. Consequently, the upper bound in Theorem \ref{thm:MainTheorem2} (b) holds. And this completes the proof of Theorem \ref{thm:MainTheorem2}.

%%%%%%%%%%%%%%%%%%%%%%%%%%%%%%%%%%%%%%%%%%%%%%%%%%%%%%%%%%%%%%%%%%%%%%%%%%%
%%%%%%%%%%%%%%%%%%%%%%%%%%%%%%%%%%%%%%%%%%%%%%%%%%%%%%%%%%%%%%%%%%%%%%%%%%%
%%%%%%%%%%%%%%%%%%%%%%%%%%%%%%%%%%%%%%%%%%%%%%%%%%%%%%%%%%%%%%%%%%%%%%%%%%%
%%%%%%%%%%%%%%%%%%%%%%%%%%%%%%%%%%%%%%%%%%%%%%%%%%%%%%%%%%%%%%%%%%%%%%%%%%%
%%%%%%%%%%%%%%%%%%%%%%%%%%%%%%%%%%%%%%%%%%%%%%%%%%%%%%%%%%%%%%%%%%%%%%%%%%%

\section{Open questions and closing remarks}\label{sec:Closing}

Norms on a finite-dimensional vector space are equivalent. For instance, the CHS norms satisfy the bound $\norm{A}_d^d\leq {n+d-1\choose d}\norm{A}^d$ for all $A\in \mathrm{H}_n$ \cite[Theorem 38]{Aguilar}. Here, $\norm{A}$ denotes the spectral norm of $A$. Moreover, equality holds if and only if $A=I_n$. However, the above bound is no longer sharp if we consider the CHS norms of graphs precisely because $I_n$ is not the adjacency matrix of a simple graph. The following natural problem appears difficult to answer.\\

\noindent\textbf{Problem 2.} Find sharp bounds for $\norm{G}_d$ in terms of $\norm{G}_{\ast}$, $\norm{G}$ and $\norm{G}_{S_p}$.\\

%The spectra of trees have a particularly nice structure. In particular, $-\lambda$ must be an eigenvalue of a tree whenever $\lambda$ is an eigenvalue. We remark that this structure is seen in the spectra for the trees $P_n$ and $S_n$. Moreover, the structure for the spectra of trees allows us to give a sharp lower bound for the CHS norms of trees in terms of the spectral norm.

Ultimately, our paper is about finding the extrema of $\norm{\cdot}_d$ over a given family of graphs. In particular, we have found the extrema of the CHS norms over the families of simple connected graphs and trees. But what about other families?\\

\noindent\textbf{Problem 3.} Can one find the extrema of the CHS norms over other families of graphs such as regular, unicyclic and complete multipartite graphs?

%%%%%%%%%%%%%%%%%%%%%%%%%%%%%%%%%%%%%%%%%%%%%%%%%%%%%%%%%%%%%%%%%%%%%%%%%%%
%%%%%%%%%%%%%%%%%%%%%%%%%%%%%%%%%%%%%%%%%%%%%%%%%%%%%%%%%%%%%%%%%%%%%%%%%%%
%%%%%%%%%%%%%%%%%%%%%%%%%%%%%%%%%%%%%%%%%%%%%%%%%%%%%%%%%%%%%%%%%%%%%%%%%%%
%%%%%%%%%%%%%%%%%%%%%%%%%%%%%%%%%%%%%%%%%%%%%%%%%%%%%%%%%%%%%%%%%%%%%%%%%%%
%%%%%%%%%%%%%%%%%%%%%%%%%%%%%%%%%%%%%%%%%%%%%%%%%%%%%%%%%%%%%%%%%%%%%%%%%%%

\end{document}